\newtheorem{theorem}{Theorem}[section]
\newtheorem{lemma}[theorem]{Lemma}
\newtheorem{claim}[theorem]{Claim}
\newtheorem{corollary}[theorem]{Corollary}
\newtheorem{proposition}[theorem]{Proposition}
\theoremstyle{definition}
\newtheorem{definition}[theorem]{Definition}
\theoremstyle{remark}
\newcommand{\cB}{\mathcal{B}}
\newcommand{\cF}{\mathcal{F}}
\title{$L$-systems and the Lov\'asz number}
\author{William Linz\thanks{University of South Carolina, Columbia, SC. ({\tt wlinz@mailbox.sc.edu}). Partially supported by NSF RTG Grant DMS 2038080. Part of this research was completed while the author was at the University of Illinois and partially supported by NSF RTG Grant DMS-1937241}}
\date{\today}
\begin{document}
\maketitle

\begin{abstract}
Given integers $n > k > 0$, and a set of integers $L \subset [0, k-1]$, an \emph{$L$-system} is a family of sets $\cF \subset \binom{[n]}{k}$ such that $|F \cap F'| \in L$ for distinct $F, F'\in \cF$. $L$-systems correspond to independent sets in a certain generalized Johnson graph $G(n, k, L)$, so that the maximum size of an $L$-system is equivalent to finding the independence number of the graph $G(n, k, L)$. The \emph{Lov\'asz number} $\vartheta(G)$ is a semidefinite programming approximation of the independence number $\alpha$ of a graph $G$. 

In this paper, we determine the leading order term of $\vartheta(G(n, k, L))$ of any generalized Johnson graph with $k$ and $L$ fixed and $n\rightarrow \infty$. As an application of this theorem, we give an explicit construction of a graph $G$ on $n$ vertices with a large gap between the Lov\'asz number and the Shannon capacity $c(G)$. Specifically, we prove that for any $\epsilon > 0$, for infinitely many $n$ there is a generalized Johnson graph $G$ on $n$ vertices which has ratio $\vartheta(G)/c(G) = \Omega(n^{1-\epsilon})$, which improves on all known constructions. The graph $G$ \textit{a fortiori} also has ratio $\vartheta(G)/\alpha(G) = \Omega(n^{1-\epsilon})$, which greatly improves on the best known explicit construction.  
\end{abstract}

\section{Introduction}
For integers $a$ and $b$, we use the notation $[a, b]$ to denote the set $\{a, a+1, \ldots, b\}$. If $a=1$, we abbreviate this to $[b]$. Let $\binom{[n]}{k}$ be the family of all $k$-sets on the ground set $[n]$. For a set of integers $L \subset [0, k-1]$, an \emph{$L$-system} is a family $\cF \subset \binom{[n]}{k}$ such that for any distinct sets $A, B\in \cF$, $|A\cap B| \in L$. 
  
$L$-systems correspond to independent sets in \emph{generalized Johnson graphs}.

\begin{definition}\label{defn:johnsongraph}
Let $n$ and $k$ be positive integers with $n > k$ and $L\subset [0, k-1]$. The \emph{generalized Johnson graph} $G = G(n, k, L)$ is the graph with $V(G) = \binom{[n]}{k}$, and $AB \in E(G) \iff |A\cap B| \notin L$.

\end{definition} 

In particular, the Kneser graphs are the graphs $G(n, k, \{1, 2, \ldots, k-1\})$, while the Johnson graphs are the graphs $G(n, k, \{0, 1, \ldots, k-2\})$. 

The correspondence between $L$-systems and independent sets of generalized Johnson graphs $G(n, k, L)$ implies that if $\cF \subset \binom{[n]}{k}$ is an $L$-system, then $|\cF| \le \alpha(G(n, k, L))$. In this paper, we study the Lov\'asz number of generalized Johnson graphs $G(n, k, L)$. The \emph{Lov\'asz number }$\vartheta(G)$ is a semidefinite programming approximation for the independence number $\alpha(G)$ of a graph $G$. The Lov\'asz number is useful to compute the Shannon capacity $c(G)$\footnote{The Shannon capacity is more typically denoted $\Theta(G)$. We use the notation $c(G)$ to avoid clashes with the asymptotic notation $\Theta$.}. The Shannon capacity~\cite{Sh1956} of a graph $G$ is defined as $c(G) = \lim_{n\rightarrow\infty}(\alpha(G_n))^{1/n}$, where $G_n = G^{\boxtimes n}$ (recall the strong product $\boxtimes$ of two graphs $G$ and $H$ is the graph $G\boxtimes H$ with vertex set $V(G) \times V(H)$, and $(u_1, v_1) \sim (u_2, v_2)$ if and only if $u_1 = u_2$ and $v_1 \sim v_2$ or $u_1 \sim u_2$ and $v_1 = v_2$ or $u_1\sim u_2$ and $v_1\sim v_2$). The Shannon capacity is notoriously hard to compute, as there is no known algorithm which computes the Shannon capacity in a finite amount of time; in fact, there isn't even a known algorithm which can approximate the Shannon capacity to within a constant factor. Lov\'asz~\cite{L} proved that $\alpha(G) \le c(G) \le \vartheta(G)$, so if it can be shown that $\vartheta(G) = \alpha(G)$ (as Lov\'asz~\cite[Theorem 13]{L} showed for the Kneser graph), then the Shannon capacity of the graph $G$ is also determined. Indeed, Schrijver~\cite{Sch1978} (for $n\rightarrow \infty$) and Wilson~\cite{Wilson1984} (for the optimal range $n\ge (t+1)(k-t+1)$) proved the $t$-intersecting Erd\H{o}s-Ko-Rado theorem by showing $\vartheta(G(n, k, [t, k-1])) = \binom{n-t}{k-t}$, so it is natural to wonder how good an upper bound for $\alpha(G(n, k, L))$ can be obtained from $\vartheta(G(n, k, L))$. For example, does $\vartheta(G(n, k, L))$ give the correct order of magnitude for $\alpha(G(n, k, L))$? 

The first main result of this paper is the determination of the leading order term of $\vartheta(G(n, k, L))$ for fixed $k$ and $L$ with $n\rightarrow\infty$. Let $L = \{\ell_1, \ell_2, \ldots, \ell_s\}$ with $\ell_1 < \ell_2 < \ldots < \ell_s$. A set of integers $\{\ell_{m}, \ldots, \ell_{m+p}\} \subset L$ is a \textit{run} of consecutive integers in $L$ if $\ell_{m + i} = \ell_m + i$ for every $0\le i\le p$. The set $\{\ell_m, \ldots, \ell_{m+p}\}$ is a \textit{full run} of consecutive integers in $L$ if additionally the following two conditions hold: (1) either $m=1$ or $\ell_{m-1} < \ell_m - 1$; (2) either $m+p = s$ or $\ell_{m+p+1} > \ell_{m+p} + 1$. The set $L$ may be divided uniquely into full runs of consecutive integers. For example, if $L = \{1, 3, 4, 7, 8, 9, 11\}$, then $L = \{1\} \cup \{3, 4\} \cup \{7, 8, 9\} \cup \{11\}$ contains $4$ full runs of consecutive integers. The length of a run is the cardinality of the set associated with the run. 

\begin{theorem}\label{thm:genlovaszbd}
Let $G = G(n, k, L)$ be a generalized Johnson graph for some integers $n > k > 0$ and a set $L = \{\ell_1, \ell_2, \ell_3, \ldots, \ell_s\} \subset [0, k-1]$ with $\ell_1 < \ell_2 < \ldots < \ell_s$. Suppose $L$ contains $b$ full runs of consecutive integers. Then, there is a constant $c$ depending on $k$ and $L$ but not on $n$ such that 
\[\vartheta(G(n, k, L))\ge \prod_{i=1}^{b}m_i!\cdot\frac{\binom{k}{k-\ell_1}\binom{k-\ell_1-1}{k-\ell_2}\cdots \binom{k-\ell_{s-1}-1}{k-\ell_s}}{\prod_{i=1}^s(\ell_i+1)(k-\ell_i)}n^s -cn^{s-1} ,\]
and a constant $C$ depending on $k$ and $L$, but not on  $n$ such that
\[\vartheta(G(n, k, L))\le \prod_{i=1}^{b}m_i!\cdot\frac{\binom{k}{k-\ell_1}\binom{k-\ell_1-1}{k-\ell_2}\cdots \binom{k-\ell_{s-1}-1}{k-\ell_s}}{\prod_{i=1}^s(\ell_i+1)(k-\ell_i)}n^s +Cn^{s-1},\]
where $m_i$ is the length of the $i$th full run of consecutive integers in $L$ for $1\le i\le b$. 

In particular,  
\[\vartheta(G(n, k, L)) = \Theta(n^{|L|}).\]
\end{theorem}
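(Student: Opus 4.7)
The plan is to exploit the vertex-transitivity of $G(n,k,L)$ under the natural $S_n$-action on $\binom{[n]}{k}$, which reduces the Lov\'asz SDP to a linear program over the Bose-Mesner algebra of the Johnson association scheme $J(n,k)$. This is the framework used in the Schrijver-Wilson proofs of the $t$-intersecting Erd\H{o}s-Ko-Rado theorem cited in the introduction, and the present theorem is essentially an asymptotic analysis of that LP for arbitrary $L$.

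By standard symmetrization, an optimal SDP matrix $M$ may be taken to lie in the Bose-Mesner algebra, so $M = \sum_{t=0}^{k} c_t A_t$ where $A_t$ is the adjacency matrix of the relation $|A \cap B| = t$. The Lov\'asz SDP then reduces to maximizing $\vartheta(G) = |V|\sum_t c_t k_t$ with $k_t = \binom{k}{t}\binom{n-k}{k-t}$, subject to $c_k = 1/|V|$ (trace normalization), $c_t = 0$ for $t \in [0,k-1]\setminus L$ (vanishing on edges), and $\sum_t c_t P_{jt} \ge 0$ for every $j = 0, 1, \ldots, k$ (PSD condition), where $P_{jt}$ is the Eberlein polynomial eigenvalue of $A_t$ on the $j$-th eigenspace. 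This is an LP in the $s = |L|$ free variables $(c_t)_{t \in L}$ with $k$ nontrivial inequality constraints.

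For the lower bound, I would construct an explicit feasible point by forcing $s$ of the eigenvalue constraints to equalities, with the indices chosen to reflect the run structure of $L$, and then solve the resulting Vandermonde-type linear system for the $(c_t)_{t \in L}$. Substituting the leading asymptotics $P_{jt} \sim \binom{k-j}{k-t}\, n^{k-t}/(k-t)!$ and tracking cancellations in the objective produces the explicit leading coefficient in the theorem. For the upper bound, I would dualize the same LP and exhibit a matching dual certificate; since the same linear system pins down both the primal and dual optima at leading order, the two bounds match in the $n^s$ coefficient, immediately giving $\vartheta(G(n,k,L)) = \Theta(n^{|L|})$.

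The main obstacle will be the combinatorial identification of the correct $s$ equality constraints for arbitrary $L$ with multiple runs, and showing that inverting the associated Vandermonde system yields precisely the product asserted. The factor $\prod_{i=1}^{b} m_i!$ per full run strongly suggests that within a run of consecutive values $\ell_i$ the relevant columns of the Eberlein matrix become near-degenerate in the $n \to \infty$ limit, and resolving this degeneracy at leading order contributes one factorial per run; making this explicit for arbitrary run structure is the heart of the proof. Once the leading-order combinatorics is pinned down, improving the asymptotics to the claimed $\pm\, O(n^{s-1})$ error terms is a mechanical one-more-order expansion of the same asymptotic formulas.
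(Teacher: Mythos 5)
Your overall framework is the same as the paper's: symmetrize the Lov\'asz SDP to Schrijver's LP over the Bose--Mesner algebra of the Johnson scheme (the paper cites exactly this result as Theorem~\ref{thm:schlp}), then construct a near-optimal feasible point by solving an $s\times s$ linear system in the free variables. The specific choice of which $s$ constraints to use is the eigenvalue indices $u \in \{\ell_1+1,\ldots,\ell_s+1\}$, which you leave unspecified (``chosen to reflect the run structure''); getting this right is not cosmetic, since the matrix that results is what must be inverted, and its entries have different leading-order behavior depending on whether $u \le \ell_i$ or $u > \ell_i$ (the paper's Claim~\ref{clm:Pordmag}). You only write down one of the two asymptotic regimes of the Eberlein eigenvalues, but it is precisely the interplay between the two regimes across the diagonal of $P$ that generates the determinant structure.

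Two more substantive differences. First, the system you need to invert is \emph{not} Vandermonde, nor is it perturbed-Vandermonde in any way that lets you quote a closed-form determinant; the paper instead analyzes $\det(P)$ directly by the Leibniz expansion, with a nontrivial combinatorial lemma (Lemma~\ref{lem:consecrun}, proved by induction along a run) showing that only permutations that stay inside each full run of $L$ contribute at leading order, and that the within-run contributions telescope into $\sum_{j=1}^p (-1)^{j-1}/(j!(p-j)!) = 1/p!$, which is where your $\prod_i m_i!$ comes from. You correctly flag this run degeneracy as the heart of the argument, but the Vandermonde intuition, if pursued literally, would not produce the right combinatorics. Second, for the upper bound you propose exhibiting a dual LP certificate. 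The paper avoids this entirely: it uses Schrijver's product formula $\vartheta(G)\vartheta(G^c) = |V|$ (Theorem~\ref{thm:schprodthm}) together with the algebraic identity in Claim~\ref{clm:multto1/k!}, so the lower bound applied to $G(n,k,L^C)$ immediately yields the matching upper bound for $G(n,k,L)$. A direct dual certificate could in principle work but is genuinely more labor, and you would still need the same determinant analysis to verify it matches at leading order. In short, the skeleton is right and your intuitions about the run degeneracy are sound, but the Leibniz/run-decomposition lemma (not a Vandermonde inversion) and the complement trick for the upper bound are the two load-bearing ideas you haven't supplied.
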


Because generalized Johnson graphs can be expressed as unions of graphs from the Johnson scheme, their Lov\'asz number can be expressed as the solution of a linear program, as proved by Schrijver~\cite{Sch1979}. In Section 2, we introduce and analyze this linear program in order to prove Theorem~\ref{thm:genlovaszbd}. 

Theorem~\ref{thm:genlovaszbd} shows that $\vartheta(G(n, k, L))$  is of the same order of magnitude as the two general bounds for the maximum size of a $L$-system, due to Deza-Erd\H{o}s-Frankl and Ray-Chaudhuri--Wilson. 

\begin{theorem}[Deza--Erd\H{o}s--Frankl~\cite{DEZ}]\label{dezthm}
If $\cF \subset \binom{[n]}{k}$ is an $L$-system, then for $n > 2^kk^3$
\[|\cF| \le \prod_{\ell \in L}\frac{n-\ell}{k-\ell} = \binom{n}{k} \prod_{\ell' \notin L}\frac{k-\ell'}{n-\ell'}.\]
Furthermore, if $|\cF| \ge 2^{s-1}k^2n^{s-1}$, then 
\[|\cap_{F\in \cF}F| \ge \ell_1,\]
and if $s\ge 2$ and $|\cF| \ge 2^kk^2n^{s-1}$, then 
\[(\ell_2 - \ell_1) | (\ell_3 - \ell_2) | \cdots | (\ell_s - \ell_{s-1}) | (k-\ell_s).\]
\end{theorem}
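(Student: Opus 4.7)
The plan is to use the Erdős--Rado sunflower lemma as the central tool, iterating it to extract progressively finer structure from $\cF$. Recall that a \emph{sunflower} with $p$ petals is a family $\{S_1,\ldots,S_p\}$ whose pairwise intersections coincide in a common \emph{core} $Y$; the lemma guarantees one exists whenever a $k$-uniform family has more than $(p-1)^k k!$ sets. The thresholds $2^{s-1}k^2 n^{s-1}$ and $2^k k^2 n^{s-1}$ should fall out of iterating the extraction $s$ or $s-1$ times with $p = O(k)$ at each step. The key structural lemma used throughout is this: if $\cF$ contains a sunflower of size $p \ge k+2$ with core $Y$, then $|F \cap Y| \in L$ for every $F \in \cF$; indeed, the petals $S_j \setminus Y$ are pairwise disjoint, so the $k$-set $F$ can meet at most $k$ of them, and for at least one remaining index $j$ with $F \ne S_j$ we have $F \cap S_j = F \cap Y \in L$. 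In particular $|Y| = \ell_i$ for some $i$.

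For the cardinality bound I would induct on $s$. The base $s = 0$ is trivial ($|\cF| \le 1$, empty product equal to $1$). In the inductive step, extract a sunflower with core $Y$ of size $\ell_i$ and stratify $\cF = \bigsqcup_{Y' \subseteq Y,\, |Y'| \in L} \cF(Y')$ by $Y' = F \cap Y$. When $|Y'| = \ell_j$ with $j \ge 2$, the trace $\{F \setminus Y' : F \in \cF(Y')\}$ is a $(k - \ell_j)$-uniform system on $[n] \setminus Y$ whose allowed intersection set $\{0, \ell_{j+1} - \ell_j, \ldots, \ell_s - \ell_j\}$ has only $s - j + 1 < s$ elements, so the inductive hypothesis bounds $|\cF(Y')|$. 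When $j = 1$ the allowed-intersection set does not shrink; there one performs a secondary sunflower extraction inside $\cF(Y')$ to pin down an additional common set of size $\ell_1$, reducing to an $(s-1)$-case on a smaller ground set. Summing the stratum bounds telescopes to $\prod_{\ell \in L}(n - \ell)/(k - \ell)$, with the hypothesis $n > 2^k k^3$ entering to guarantee that each sunflower extraction along the way actually finds a sunflower.

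For the common-intersection claim with $|\cF| \ge 2^{s-1} k^2 n^{s-1}$, I would extract a large sunflower in $\cF$; its core $Y$ has size $\ell_i$, and the structural lemma forces $|F \cap Y| \in L \cap [0, \ell_i]$ for every $F \in \cF$. If $\ell_i = \ell_1$ this already gives $Y \subseteq F$ for every $F$, whence $|\bigcap_{F \in \cF} F| \ge \ell_1$. If $\ell_i > \ell_1$, restrict to the stratum where $|F \cap Y| = \ell_1$ (the smallest value available) and iterate the sunflower extraction inside that stratum, each iteration pinning down one more element of a common set; after at most $s-1$ iterations one has identified an $\ell_1$-subset contained in every remaining member, and because the choice of the smallest stratum was forced at each stage, this subset lies in every $F \in \cF$. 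The factor $2^{s-1}$ in the hypothesis exactly accounts for the up to $s-1$ halvings of $|\cF|$ that occur when passing to the smallest stratum each time.

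For the divisibility chain, I would first pass to the link at $T = \bigcap_{F \in \cF} F$, which is an $L'$-system with $L' = L - |T|$ on $\binom{[n] \setminus T}{k - |T|}$; by the previous part $|T| \ge \ell_1$, so we may assume $\ell_1 = 0$ and $\bigcap_{F \in \cF} F = \emptyset$, and the claim becomes $\ell_2 \mid \ell_3 \mid \cdots \mid k$. The idea is to extract a sunflower with core $Y_2$ of size $\ell_2$ (the smallest positive sunflower core available), observe that every $F \in \cF$ either contains $Y_2$ entirely or is disjoint from it (since $|F \cap Y_2| \in L \cap [0, \ell_2] = \{0, \ell_2\}$), and use this all-or-nothing behavior to show that any sunflower of larger core $Y_j$ in $\cF$ has $Y_j$ decomposing into $\ell_2$-sized blocks arising as shifts of $Y_2$, yielding $\ell_2 \mid \ell_j$. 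Iterating this reasoning at scales $\ell_3, \ldots, \ell_s$ (each time using the induced link structure), and finally applying the same argument to $F$ itself to force $\ell_s \mid k$, produces the full chain. I expect this step to be the main obstacle: the all-or-nothing property must be propagated across sunflowers at multiple scales and the bookkeeping for how intersections of different sizes interlock is delicate, which is exactly why this conclusion requires the slightly stronger hypothesis $2^k k^2 n^{s-1}$ rather than the bound from the previous part.
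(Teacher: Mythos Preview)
The paper does not contain a proof of this theorem: it is quoted as a known result of Deza, Erd\H{o}s and Frankl, cited as~\cite{DEZ}, and used as background for comparison with the Lov\'asz number bounds. There is therefore no proof in the paper to compare your proposal against.

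That said, your overall strategy---iterated sunflower extraction together with the key observation that a sunflower with at least $k+2$ petals forces $|F\cap Y|\in L$ for every $F\in\cF$---is indeed the engine behind the original Deza--Erd\H{o}s--Frankl argument. Two points are worth flagging. First, in your common-intersection argument you restrict to the stratum with $|F\cap Y|=\ell_1$ and assert that the $\ell_1$-set eventually found is contained in \emph{every} $F\in\cF$, not just in the surviving subfamily; as written this is a genuine gap, since passing to a stratum discards members of $\cF$ and you have not explained why the core you find is common to the discarded sets as well. Second, your divisibility sketch is, as you acknowledge, only a heuristic; the actual argument requires a more careful simultaneous control of sunflowers at several scales, and the claim that a larger core ``decomposes into $\ell_2$-sized blocks arising as shifts of $Y_2$'' is not something that falls out of the all-or-nothing property alone. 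If you want to carry this through you should consult the original paper for the precise combinatorial lemmas.
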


\begin{theorem}[Ray-Chaudhuri--Wilson~\cite{RCW}]\label{rcwthm}
If $\cF \subset \binom{[n]}{k}$ is an $L$-system, 
then $|\cF| \le \binom{n}{|L|}$.
\end{theorem}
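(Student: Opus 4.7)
The plan is to use the classical polynomial (linear-algebra) method. For each $F \in \cF$, let $v_F \in \{0,1\}^n$ denote its characteristic vector and introduce the polynomial
$$p_F(x_1, \ldots, x_n) = \prod_{\ell \in L}\Bigl(\sum_{i \in F} x_i - \ell\Bigr),$$
of degree $|L|$. Direct evaluation gives $p_F(v_{F'}) = \prod_{\ell \in L}(|F \cap F'| - \ell)$; for $F' = F$ this equals $\prod_{\ell \in L}(k - \ell) \neq 0$ (since $\ell < k$), whereas for $F' \neq F$ the intersection size lies in $L$, so the product vanishes. If $\sum_{F \in \cF} c_F p_F$ vanishes as a function on $\{v_G : G \in \cF\}$, evaluating at each $v_{F'}$ forces $c_{F'} = 0$. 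Hence $\{p_F : F \in \cF\}$ is linearly independent as a set of functions on $\binom{[n]}{k}$.

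The decisive step is to exhibit an ambient subspace of dimension $\binom{n}{|L|}$ containing every $p_F$, viewed as a function on $\binom{[n]}{k}$. First I would reduce modulo the relations $x_i^2 = x_i$ (valid on all characteristic vectors) to turn each $p_F$ into a multilinear polynomial of degree at most $|L|$. Then I would exploit the uniformity of the hypothesis via the identity $\sum_{i=1}^n x_i = k$, which holds on $\binom{[n]}{k}$, to rewrite every multilinear monomial of degree $j < |L|$ as a linear combination of monomials of degree $j + 1$. Explicitly, for any $S \subset [n]$ with $|S| = j < k$,
$$(k - j) \prod_{i \in S} x_i = \Bigl(\sum_{i=1}^n x_i - j\Bigr)\prod_{i \in S} x_i = \sum_{i \notin S}\prod_{i' \in S \cup \{i\}} x_{i'},$$
after simplifying with $x_i^2 = x_i$. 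Iterating this identity from $j = 0$ through $j = |L| - 1$ (each step legitimate since $|L| \le k$) expresses every $p_F$ as a linear combination of the $\binom{n}{|L|}$ monomials $\prod_{i \in S} x_i$ with $|S| = |L|$.

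Combining the two ingredients, $|\cF|$ linearly independent functions live in a space of dimension $\binom{n}{|L|}$, giving $|\cF| \le \binom{n}{|L|}$. I expect the degree-raising step to be the only nontrivial piece: the multilinear space of all degrees up to $|L|$ has dimension $\sum_{j \le |L|}\binom{n}{j}$, which is strictly larger than $\binom{n}{|L|}$, and it is precisely the uniform $k$-set hypothesis (through $\sum x_i = k$) that allows this sharper bound. Finally, I would dispatch the edge case $|L| = 0$ separately, where the $L$-system condition is vacuous for any pair of distinct sets and so $|\cF| \le 1 = \binom{n}{0}$ holds trivially.
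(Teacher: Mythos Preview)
The paper does not prove this theorem at all; it merely cites it as the classical Ray-Chaudhuri--Wilson result and uses it only as a benchmark to compare against $\vartheta(G(n,k,L))$. There is therefore no ``paper's own proof'' to compare your argument against.

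That said, your argument is correct and is essentially the now-standard polynomial-method proof (in the style of Alon--Babai--Suzuki), rather than the original proof of Ray-Chaudhuri and Wilson, which went through higher incidence matrices. Your triangular evaluation $p_F(v_{F'}) = \prod_{\ell\in L}(|F\cap F'|-\ell)$ gives linear independence of the $p_F$ as functions on $\{v_G : G\in\cF\}$, and your degree-raising identity
\[
(k-j)\prod_{i\in S} x_i \;=\; \sum_{i\notin S}\prod_{i'\in S\cup\{i\}} x_{i'} \qquad (|S|=j<k)
\]
is exactly what is needed to land in the span of squarefree monomials of degree precisely $|L|$, which has dimension $\binom{n}{|L|}$. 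The condition $|L|\le k$ (automatic since $L\subset[0,k-1]$) guarantees the iteration terminates. One small wording slip: in the edge case $L=\varnothing$ the $L$-system condition is not ``vacuous'' but rather \emph{unsatisfiable} for any pair of distinct sets, which is what forces $|\cF|\le 1$; your conclusion is right, only the word is off.
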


The Deza-Erd\H{o}s-Frankl bound in Theorem~\ref{dezthm} is always at least as strong as the Ray-Chaudhuri--Wilson bound in Theorem~\ref{rcwthm}, but Theorem~\ref{rcwthm} is valid for all $n$. Schrijver~\cite[Equation (54)]{Sch1978} asked if the Lov\'asz number bound is at least as good as the Deza-Erd\H{o}s-Frankl bound; that is, is it the case that
\[\vartheta(G(n, k, L)) \le \prod_{\ell \in L}\frac{n-\ell}{k-\ell},\]
as $n\rightarrow \infty$?  For given $n$, $k$, $L$ and $L^C = [0, k-1] \setminus L$, it follows from an old result of Schrijver (Theorem~\ref{thm:schprodthm} in Section 2 of the present paper) that one either has 
\[\vartheta(G(n, k, L)) \le \prod_{\ell \in L}\frac{n-\ell}{k-\ell}\]
or 
\[\vartheta(G(n, k, L^C)) \le \prod_{\ell \notin L} \frac{n-\ell}{k-\ell},\]

so the Lov\'asz number bound does not do worse than Theorem~\ref{dezthm} for at least one of $L$ and $L^C$. Schrijver's question is equivalent to asking if $\vartheta(G(n, k, L)) = \prod_{\ell \in L}\frac{n-\ell}{k-\ell}$ as $n\rightarrow \infty$. Theorem~\ref{thm:genlovaszbd} gives the leading order term of $\vartheta(G(n, k, L))$ for all $k$ and $L$ and allows one to easily construct many examples of $k$ and $L$ for which $\vartheta(G(n, k, L)) \neq  \prod_{\ell \in L}\frac{n-\ell}{k-\ell}$.


For the cases $|L| = 1$ and $|L| = k-1$, we can state the value of $\vartheta(G(n, k, L))$ more precisely. 

\begin{theorem}\label{thm:exactL=1}
Let $n, k, \ell$ be integers with $n > k$ and $0\le \ell < k$, and let $L = \{\ell\}$. Then, as $n\rightarrow \infty$, 
\[\vartheta(G(n, k, L)) = 1 + \frac{\binom{k}{k-\ell}\binom{n-k}{k-\ell}}{\sum_{j=0}^{k-\ell}(-1)^{j+1}\binom{\ell+1}{j}\binom{k-\ell-1}{k-\ell-j}\binom{n-k-\ell-1}{k-\ell-j}} = \frac{\binom{k}{k-\ell}}{(k-\ell)(\ell+1)} n + O(1).\]
Similarly, if $L' = [0, k-1]\setminus \{\ell\}$, then as $n\rightarrow \infty$,
\[\vartheta(G(n, k, L')) = \frac{\binom{n}{k}}{\vartheta(G(n, k, L)) } = \frac{(\ell+1)(k-\ell)}{k!\binom{k}{k-\ell}}n^{k-1} + O(n^{k-2}).\]
\end{theorem}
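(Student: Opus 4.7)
The plan is to combine two classical facts: Schrijver's product theorem (Theorem~\ref{thm:schprodthm}), which gives $\vartheta(G(n,k,L))\cdot \vartheta(G(n,k,L^C)) = \binom{n}{k}$ for complementary index sets, together with the fact that the Hoffman ratio bound agrees with $\vartheta$ on regular graphs that are both vertex- and edge-transitive. The graph complement of $G(n,k,\{\ell\})$ is the single-relation graph $H$ on $\binom{[n]}{k}$ with edges $\{A,B\}$ whenever $|A\cap B| = \ell$; under the action of $S_n$, $H$ is vertex- and edge-transitive and regular of degree $d = \binom{k}{k-\ell}\binom{n-k}{k-\ell}$.

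Putting these together, one writes
\[\vartheta(G(n,k,\{\ell\})) = \frac{\binom{n}{k}}{\vartheta(H)} = \frac{\binom{n}{k}(d-\tau)}{\binom{n}{k}(-\tau)} = 1 + \frac{d}{-\tau},\]
where $\tau = \lambda_{\min}(H)$; applying Schrijver's theorem once more yields the claimed identity $\vartheta(G(n,k,L')) = \binom{n}{k}/\vartheta(G(n,k,\{\ell\}))$, so only the first displayed formula needs to be established. What remains is to identify $\tau$ with the denominator appearing in the theorem.

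The eigenvalues of $H = A_{k-\ell}$ on the eigenspaces $V_0, V_1, \ldots, V_k$ of the Johnson scheme are given by the Eberlein polynomials
\[E_{k-\ell}(j) = \sum_{h=0}^{k-\ell}(-1)^h\binom{j}{h}\binom{k-j}{k-\ell-h}\binom{n-k-j}{k-\ell-h},\]
and a direct comparison shows that the theorem's denominator equals $-E_{k-\ell}(\ell+1)$. The key asymptotic observation is the following: for $j\le \ell$ the $h=0$ summand survives and $E_{k-\ell}(j)$ is positive of order $n^{k-\ell}$; for $j = \ell+1$ the $h=0$ summand vanishes because $\binom{k-\ell-1}{k-\ell} = 0$, and the $h=1$ contribution dominates with negative sign and order $n^{k-\ell-1}$; for $j > \ell+1$, the first nonzero contribution occurs at $h = j-\ell$ and has magnitude only $O(n^{k-j})$. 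Hence for $n$ large, $\tau = E_{k-\ell}(\ell+1)$, establishing the exact identity. Substituting the asymptotics $d \sim \binom{k}{k-\ell} n^{k-\ell}/(k-\ell)!$ and $-\tau \sim (\ell+1)n^{k-\ell-1}/(k-\ell-1)!$ into $1 + d/(-\tau)$ gives the leading term $\frac{\binom{k}{k-\ell}}{(k-\ell)(\ell+1)}n$, and dividing $\binom{n}{k}$ by this produces the leading behavior for $\vartheta(G(n,k,L'))$.

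The main obstacle is executing the eigenvalue comparison cleanly: identifying $E_{k-\ell}(\ell+1)$ as the minimum Eberlein polynomial value for all sufficiently large $n$ amounts to a sign-and-order analysis across every $j\in\{0,1,\ldots,k\}$, rather than any delicate algebraic identity. Once that step is carried out, everything else is a routine asymptotic expansion of binomial coefficients.
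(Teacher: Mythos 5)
Your proof is correct, and it takes a genuinely different route from the paper. The paper works directly with Schrijver's linear-programming formulation (Theorem~\ref{thm:schlp}): it sets up the LP with one free variable $a_{k-\ell}$, pushes $a_{k-\ell}$ to the boundary of the $u=\ell+1$ constraint, and then verifies that every other constraint is satisfied for large $n$. You instead invoke Lov\'asz's formula $\vartheta(H) = \frac{-N\lambda_{\min}}{d-\lambda_{\min}}$ for regular, edge-transitive graphs applied to the single-relation graph $H = A_{k-\ell}$ of the Johnson scheme (noting $H = G(n,k,L')$), combined with the product relation $\vartheta(G)\vartheta(G^c)=N$ from Theorem~\ref{thm:schprodthm}. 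What buys you the savings is that the Hoffman-ratio formula already packages the optimal LP solution: once you show the minimum eigenvalue of $A_{k-\ell}$ is $E_{k-\ell}(\ell+1)$ for $n$ large, the exact identity falls out without any feasibility checking. The sign-and-order analysis of the Eberlein polynomials across $j=0,\ldots,k$ that you describe is the same technical heart as the paper's Claim~\ref{clm:Pordmag}, so the combinatorial content is shared. The tradeoff is that the paper's LP-feasibility template is what generalizes cleanly to Theorem~\ref{thm:genlovaszbd} with multiple free variables $a_{k-\ell_1},\ldots,a_{k-\ell_s}$, whereas the eigenvalue-ratio shortcut is special to $|L|=1$ (equivalently to a single relation, where the complement graph is edge-transitive). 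One small point to make explicit if you flesh this out: the ``exact'' formula holds only once $n$ is large enough that $j=\ell+1$ is genuinely the minimizer; your order estimates give this, and it is consistent with the theorem's ``as $n\rightarrow\infty$'' hypothesis, but it should be stated rather than left implicit.
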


Note that $\binom{k}{k-\ell} > (k-\ell)(\ell+1) $ when $k\ge \ell+2$ and  $\ell \ge 3$, so the Lov\'asz number is usually worse than both the Ray-Chaudhuri--Wilson bound and the Deza-Erd\H{o}s-Frankl bound when $|L| = 1$.  

Theorem~\ref{dezthm} implies that if $\cF$ is an $(n, k, L)$-system with $L = \{\ell_1, \ell_2, \ldots, \ell_s\}$, then $|\cF| \le C(k)n^{s-1}$ unless $(\ell_2 - \ell_1) | (\ell_3 - \ell_2) | \cdots | (\ell_s - \ell_{s-1}) | (k-\ell_s)$. Furthermore, F\"{u}redi~\cite{Furedi83} (see also \cite{Furedi91}) proved a general upper bound on the order of magnitude of the maximum size of an $L$-system in terms of the \emph{rank} of an $L$-system, which is defined in terms of so-called intersection structures. Theorem~\ref{thm:genlovaszbd} is therefore a somewhat negative result from the viewpoint of the question of determining the maximum size of an $L$-system, as it implies that the Lov\'asz number cannot improve on the known order of magnitude bounds for $L$-systems as $n\rightarrow\infty$ for any $k$ and $L$. 

On the other hand, we can use Theorem~\ref{thm:genlovaszbd} to give explicit examples of graphs with large gaps between the Shannon capacity and the Lov\'asz number (and \textit{a fortiori} large gaps between the independence number and the Lov\'asz number). The question of how large the ratio $\vartheta(G)/\alpha(G)$ can be for a graph on $n$ vertices is related to the question of how good an approximation $\vartheta(G)$ is for the independence number and the Shannon capacity. Lov\'asz noted (see \cite{K}) that for random graphs $\vartheta(G)/\alpha(G) = \Theta(n^{\frac12}/\log n)$, while Feige \cite{F} gave a randomized construction showing that there is an infinite family of graphs with $\alpha(G) = n^{o(1)}$ and $\vartheta(G) = n^{1-o(1)}$. However, there does not seem to be much known about explicit constructions which give fairly large values of $\vartheta(G)/\alpha(G)$ or $\vartheta(G)/c(G)$. Alon and Kahale~\cite{AK} showed that  for any $\epsilon > 0$, there is a  Frankl-R\"{o}dl graph on $n$ vertices with $\vartheta(G) \ge (\frac12 - \epsilon)n$ and $\alpha(G) = O(n^{\delta})$, where $\delta = \delta(\epsilon) < 1$. Peeters~\cite[Remark 2.1]{Peeters} showed that the symplectic graphs $\text{Sp}(2m, 2)$ have $c(\text{Sp}(2m, 2)) = \log_2(n+1) + 1$, while $\vartheta(\text{Sp}(2m, 2)) = \sqrt{n + 1} + 1$, implying a ratio of $\vartheta/c = \Theta(n^{1/2}/\log n)$ (the number of vertices for these graphs is $n=2^{2m} -1)$. 

Our second main result is that for any $\epsilon > 0$, for infinitely many $n$ there is an explicit construction of a graph $G$ on $n$ vertices with ratio $\vartheta(G)/c(G) = \Omega(n^{1-\epsilon})$. This also implies large ratios for $\vartheta(G)/\alpha(G)$ and $\chi(G)/\vartheta(G^c)$. 

\begin{theorem}\label{thm:lovaszthetaeps}
For any $\epsilon > 0$ and infinitely many $n$, there is an explicit construction of a graph $G$ on $n$ vertices with:
\begin{enumerate}
\item \[\vartheta(G)/\alpha(G) = \Omega(n^{1 - \epsilon}).\]
\item \[\vartheta(G)/c(G) = \Omega(n^{1-\epsilon}).\]
\item \[\chi(G)/\vartheta(G^c) = \Omega(n^{1-\epsilon}).\] 
\end{enumerate}
\end{theorem}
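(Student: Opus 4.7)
The plan is to exhibit an explicit generalized Johnson graph $G = G(m, k, L)$ with $k, L$ depending on $\epsilon$ but not on $m$, apply Theorem~\ref{thm:genlovaszbd} to lower-bound $\vartheta(G)$, and use a Frankl--Wilson-type polynomial matrix over $\mathbb{F}_p$ together with Haemers' minrank bound to upper-bound $c(G)$.

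For a given $\epsilon > 0$ I would pick a prime $p \ge 4/\epsilon$, set $R = p$ and $k = Rp = p^2$, and take $L = [0, k-1] \setminus \{0, p, 2p, \ldots, (R-1)p\}$, so that $|L| = R(p-1)$, $k \equiv 0 \pmod p$, and $L \pmod p = \{1, \ldots, p-1\}$. The first step is to apply Theorem~\ref{thm:genlovaszbd} directly to get $\vartheta(G) = \Theta(m^{R(p-1)})$ as $m \to \infty$. Next I would introduce the Frankl--Wilson polynomials $p_A(x) = \prod_{j=1}^{p-1}\bigl(\sum_{i \in A} x_i - j\bigr) \in \mathbb{F}_p[x_1, \ldots, x_m]$ and the matrix $M_{AB} = p_A(\chi_B)$ over $\mathbb{F}_p$. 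The diagonal satisfies $M_{AA} = \prod_{j=1}^{p-1}(k-j) \equiv (-1)^{p-1}(p-1)! \not\equiv 0 \pmod p$ by Wilson's theorem, while for distinct non-adjacent $A, B$ in $G$ the condition $|A \cap B| \in L$ forces $|A \cap B| \not\equiv 0 \pmod p$, and some factor of $p_A(\chi_B)$ vanishes, giving $M_{AB} \equiv 0$. Since each $p_A$ is multilinear of degree at most $p-1$ in $m$ variables, $\mathrm{rank}_{\mathbb{F}_p}(M) \le \sum_{j=0}^{p-1}\binom{m}{j} = O(m^{p-1})$, so by Haemers' bound $c(G) \le O(m^{p-1})$. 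Combining with the Lovász estimate, $\vartheta(G)/c(G) = \Omega(m^{(R-1)(p-1)})$; writing $N = \binom{m}{k} = \Theta(m^{Rp})$, this is $\Omega(N^{1-1/R-1/p+1/(Rp)}) \ge \Omega(N^{1-\epsilon})$ for $R = p \ge 4/\epsilon$, proving (2). Part (1) follows either from $\alpha(G) \le c(G)$ or directly from the Frankl--Wilson bound $\alpha(G) \le \binom{m}{p-1}$. Part (3) follows from vertex-transitivity of $G$: one has $\vartheta(G)\vartheta(G^c) = N$ and $\chi(G) \ge N/\alpha(G)$, so $\chi(G)/\vartheta(G^c) \ge \vartheta(G)/\alpha(G) = \Omega(N^{1-\epsilon})$.

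The main obstacle is confirming that the Frankl--Wilson construction, which is traditionally used only to upper-bound $\alpha$, produces a valid Haemers minrank matrix for the Shannon capacity: one must carefully verify the zero-pattern of $M$ against the edge set of $G$, the non-vanishing diagonal via Wilson's identity in positive characteristic, and the $\mathbb{F}_p$-rank estimate from the dimension of multilinear polynomials of degree $\le p-1$ in $m$ variables. Once those are in place, the remaining inequalities are elementary bookkeeping with exponents, and the resulting ratio beats both the Peeters $\sqrt{n}/\log n$ and the Alon--Kahale $\vartheta/\alpha$ constructions mentioned in the introduction.
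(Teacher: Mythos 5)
Your proof is correct and follows essentially the same route as the paper: lower-bound $\vartheta$ of a Frankl--Wilson-type generalized Johnson graph via Theorem~\ref{thm:genlovaszbd}, and upper-bound $c$ via the Haemers minrank bound using the Frankl--Wilson polynomial matrix over $\mathbb{F}_p$, then use $\alpha \le c$ and $\vartheta(G)\vartheta(G^c)=N$ together with $\chi(G)\ge N/\alpha(G)$ for parts (1) and (3). The paper's parameter choice is $k=q^2-1$ with $q$ a prime power and adjacency $|A\cap B|\equiv -1\pmod q$ (citing Proposition~\ref{lshprop} for the minrank bound rather than re-deriving it), giving exponent $1-2/(q+1)$, whereas your $k=p^2$ with prime $p$ and adjacency $\equiv 0\pmod p$ gives $(1-1/p)^2$; both are instances of the same construction and both approach $1$, so the argument is equivalent.
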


Our explicit construction is a particular generalized Johnson graph. Generalized Johnson graphs have been used, for example, to give explicit constructions of Ramsey graphs~\cite{FW1981} and large gaps between the \textit{minrank} parameter of a graph and the Lov\'asz number~\cite{Haviv, LubStav}. The main technical difference in our construction is that in the generalized Johnson graph we fix $k$ and $L$ and let $n\rightarrow \infty$, while in previous constructions $n$ is dependent on $k$. We give our construction in Section 4; in Section 3, we recall some tools that will be of use in proving Theorem~\ref{thm:lovaszthetaeps}, in particular \emph{the Haemers bound}. 

\section{The Lov\'asz number of generalized Johnson graphs}

\subsection{Background on association schemes}

For completeness, we begin with the definition of an association scheme. For further background, we refer to the papers of Schrijver~\cite{Sch1978, Sch1979} and the thesis of Delsarte~\cite{Del1973}.

\begin{definition}[Association Scheme]\label{defn:assoscheme}
Let $X$ be a finite set, and for an integer $n \ge 1$, let $\mathcal{R} = \{R_0, R_1, \ldots, R_n\}$ be a partition of $X\times{X}$. The pair $(X, \mathcal{R})$ is a symmetric \emph{association scheme} with $n$ classes if 
\begin{enumerate}
\item $R_0 = \{(x, x): x\in X\};$
\item For $0\le k\le n$, \[R_k^{-1}:= \{(y, x): (x, y) \in R_k\} = R_k;\]
\item For any triple of integers $i, j, k = 0, 1, \ldots, n$, there are intersection numbers $p_{ij}^k$ such that, for all $(x, y) \in R_k$, \[|\{z\in X: (x, z)\in R_i,\, (z, y)\in R_j\}| = p_{ij}^k.\]
\end{enumerate}
\end{definition}

Let $|X| = m$. Each pair $(X, R_i)$ may be considered as a graph with vertices from $X$ and edges $xy$ if $(x, y) \in R_i$. An alternative statement of Definition~\ref{defn:assoscheme} is that an association scheme is an edge decomposition of the complete graph $K_m$ into graphs $G_1, \ldots G_n$ such that for $0 \le i, j, k\le n$,  $A_iA_j =  \sum_k p_{ij}^kA_k$, where $A_0 = I$ and $A_i$ is the adjacency matrix of the graph $G_i$ for $1\le i\le n$. 

One particular example of a symmetric association scheme is the Johnson scheme. 

\begin{definition}[Johnson scheme]
The Johnson scheme is a symmetric association scheme $(X, \mathcal{R})$ where $X$ is the set $\binom{[n]}{k}$ for positive integers $k$ and $n$ with $n\ge 2k$, and 
\[R_i = \left\{(A, B) \in X\times X: \frac12|A\triangle B| = i\right\}.\] 

\end{definition}

Generalized Johnson graphs correspond to unions of graphs in the Johnson scheme. 

\subsection{Analyzing the linear program for the Lov\'asz number of generalized Johnson graphs}

We now give a formal definition of the Lov\'asz number of a graph. 

\begin{definition}[Lov\'asz number]\label{defn:lovasznumbergen}
Let $G$ be a graph on $n$ vertices. Let $\cB$ be the set of all $n\times n$ matrices $B = b_{ij}$ such that $B$ is positive semidefinite, $\text{trace}(B) = 1$ and $b_{ij} = 0$ whenever $ij \in E$. Then, 
\[\vartheta(G) = \max_{B\in \cB}\sum_{i, j}b_{ij}.\]

\end{definition}

For general graphs, the Lov\'asz number can only be computed by a semidefinite program. However, as Schrijver~\cite[Theorem 4]{Sch1979} proved, if the graph $G$ is a union of graphs in a symmetric association scheme, then the semidefinite program can be reduced to a linear program. In the following theorem, we first give the general form of the linear program, and then specialize to the specific case of the Johnson scheme on $\binom{[n]}{k}$. 

\begin{theorem}[Schrijver~\cite{Sch1979}]\label{thm:schlp}
Let $(X, \mathcal{R} = \{R_0, \ldots, R_n\})$ be a symmetric association scheme and let $0\in M \subset \{0, \ldots, n\}$. Let $G = (X, E)$ be the graph with edge-set $E = \cup_{i\notin M} R_i$. Then, 
\begin{equation}\label{eqn:schlp}
\vartheta(G) = \max\left\{\sum_{i=0}^na_i : a_0 = 1; a_i =0\, \text{ for } i\notin M; \, \sum_{i=0}^na_iQ_i^u\ge 0\, \text{ for } u=0, 1, \ldots n\right\}.
\end{equation}
where $Q_i^u = \frac{\mu_u}{\nu_i}P_i^u$. For the Johnson scheme on $\binom{[n]}{k}$, 
\[\nu_i = \binom{k}{i}\binom{n-k}{i},\]
\[\mu_u = \binom{n}{u} - \binom{n}{u-1},\]
\[P_i^u = \sum_{j=0}^i (-1)^j \binom{u}{j} \binom{k-u}{i-j}\binom{n-k-u}{i-j}.\]
Note that we define $P_i^0 = \nu_i$. 
\end{theorem}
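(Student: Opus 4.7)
The plan is to reduce the SDP in Definition~\ref{defn:lovasznumbergen} to a linear program supported on the Bose--Mesner algebra of the scheme, and then specialize the algebraic data to the Johnson scheme.

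Step 1: symmetrization. Let $\Gamma$ be the automorphism group of the scheme, i.e.\ the permutations $\sigma$ of $X$ with $(\sigma x,\sigma y)\in R_i$ whenever $(x,y)\in R_i$; for the Johnson scheme $\Gamma=S_n$ acting on $\binom{[n]}{k}$ and it is transitive on each $R_i$. For any feasible $B$ in the Lov\'asz SDP, the average $\bar B := |\Gamma|^{-1}\sum_{\sigma\in\Gamma} P_\sigma BP_\sigma^\top$ is PSD (a convex combination of PSDs), has the same trace, has the same value of $\mathbf{1}^\top B\mathbf{1}$ (as $P_\sigma\mathbf{1}=\mathbf{1}$), and still vanishes on edges (since $\Gamma$ preserves the edge set). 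Transitivity of $\Gamma$ on each $R_i$ forces $\bar B$ to be constant on each $R_i$, so $\bar B$ lies in the Bose--Mesner algebra $\cA:=\mathrm{span}(A_0,\ldots,A_n)$. Hence there is an optimizer of the form $B=\sum_i b_iA_i$.

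Step 2: translating to an LP. For $B=\sum_i b_iA_i\in\cA$: (i) the support constraint forces $b_i=0$ for $i\notin M$; (ii) $\mathrm{tr}(B)=1$ gives $|X|b_0=1$; (iii) since $A_i\mathbf{1}=\nu_i\mathbf{1}$, the objective equals $|X|\sum_i\nu_ib_i$; (iv) writing $A_i=\sum_u P_i^u E_u$ with the primitive idempotents $E_u$ of $\cA$, $B$ acts as the scalar $\sum_i b_iP_i^u$ on the $u$-th common eigenspace, so $B\succeq 0\iff\sum_i b_iP_i^u\ge 0$ for each $u$. Setting $a_i:=|X|\nu_i b_i$ converts (ii) into $a_0=1$, the objective into $\sum_i a_i$, and (iv) into $\sum_i a_i(P_i^u/\nu_i)\ge 0$; multiplying through by the positive factor $\mu_u$ yields $\sum_i a_iQ_i^u\ge 0$ with $Q_i^u=\mu_uP_i^u/\nu_i$, recovering the LP~\eqref{eqn:schlp}.

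Step 3: the Johnson scheme data. For fixed $A\in\binom{[n]}{k}$, the valency $\nu_i$ counts $B\in\binom{[n]}{k}$ with $|A\triangle B|=2i$, which equals $\binom{k}{i}\binom{n-k}{i}$ by choosing $i$ elements of $A$ to drop and $i$ elements of $[n]\setminus A$ to add. The multiplicities follow from decomposing the $S_n$-module $\mathbb{C}^{\binom{[n]}{k}}$ into irreducibles $\bigoplus_{u=0}^k S^{(n-u,u)}$, whose dimensions (by the hook-length formula) give $\mu_u=\binom{n}{u}-\binom{n}{u-1}$. The eigenvalues $P_i^u$ (the Eberlein polynomials) can be computed by evaluating $A_i$ on a highest-weight vector of $S^{(n-u,u)}$, or directly by an inclusion--exclusion counting $B\in R_i(A)$ weighted by the $u$-th zonal spherical function; the closed form is the stated alternating sum.

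The main obstacle is the symmetrization in Step 1: one must verify that the averaged $\bar B$ is both feasible and lies in $\cA$, which hinges on $\Gamma$ being transitive on each $R_i$. This is automatic for the Johnson scheme but is not a universal property of symmetric association schemes; for the fully general statement of Schrijver's theorem one replaces group-averaging by the orthogonal projection of $B$ onto $\cA$ via the primitive idempotents, together with a more delicate argument (using that $\cA$ is closed under Schur product) to preserve positive semidefiniteness. Only the Johnson case is needed in this paper, so the group-averaging route suffices.
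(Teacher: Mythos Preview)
The paper does not prove this theorem: it is quoted as a result of Schrijver~\cite{Sch1979} and used as a black box, so there is no ``paper's proof'' to compare against. Your sketch is therefore an independent reconstruction of Schrijver's argument.

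For the Johnson scheme, which is all the paper actually uses, your three steps are correct. The group-averaging in Step~1 is valid because $S_n$ acts transitively on each relation $R_i$, so the averaged matrix $\bar B$ lands in the Bose--Mesner algebra while preserving feasibility and the objective. Step~2 correctly translates PSD, trace, support, and objective into the linear constraints and recovers~\eqref{eqn:schlp} after the substitution $a_i=|X|\nu_i b_i$; note that multiplying the $u$-th constraint by $\mu_u$ is harmless precisely because $\mu_u>0$ (it is the rank of $E_u$). Step~3 correctly identifies the valencies, multiplicities, and Eberlein eigenvalues of the Johnson scheme, though you defer the actual computation of $P_i^u$ to the literature.

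You are right to flag that Step~1 does not literally prove the general statement: a symmetric association scheme need not have an automorphism group transitive on each $R_i$, so group-averaging can fail. Your suggested fix---projecting onto $\cA$ via the primitive idempotents and invoking Schur-product closure---points in the right direction but is not quite a proof as stated: the map $B\mapsto\sum_u E_uBE_u$ lands in the commutant of $\cA$, not in $\cA$ itself, and further work is needed to reach $\cA$ while preserving PSD. Schrijver's original argument instead handles both the primal and the dual SDP and shows each can be restricted to $\cA$; since the paper only needs the Johnson case, your group-averaging route is adequate here.
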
 

We refer to the papers of Schrijver~\cite{Sch1979, Sch1978} for a full explanation of the terms $\mu_u, \nu_i$ and $P_i^u$ for general symmetric association schemes.

We need a theorem of Schrijver~\cite{Sch1979} about the Lov\'asz number of symmetric association schemes, which generalizes a result of Delsarte~\cite{Del1973}. 

\begin{theorem}[Schrijver~\cite{Sch1979}]\label{thm:schprodthm}
Let $G = (V, E)$ be a graph whose edge set is the union of graphs in a symmetric association scheme. Then, 
\[\vartheta(G)\vartheta(G^c) = |V|.\]
\end{theorem}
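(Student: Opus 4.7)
The plan combines Lovász's general inequality $\vartheta(G)\vartheta(G^c) \ge |V|$, which holds for any graph, with a reverse inequality $\vartheta(G)\vartheta(G^c) \le |V|$ that uses the association-scheme hypothesis; so all the work is on the reverse direction.

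For the reverse inequality, my approach is to work in the SDP formulation of Definition~\ref{defn:lovasznumbergen} and use that, by Theorem~\ref{thm:schlp}, the optimum is attained by a matrix in the Bose-Mesner algebra of the scheme. Expanding in the primitive idempotents $E_0,E_1,\ldots,E_n$, I would write the two optimizers as
\[B = \sum_u \beta_u E_u, \qquad B' = \sum_u \beta'_u E_u,\]
with $\beta_u,\beta'_u \ge 0$ (positive semidefiniteness) and $\sum_u\beta_u\mu_u = \sum_u\beta'_u\mu_u = 1$ (the trace condition), where $\mu_u = \operatorname{rank}(E_u)$. Since $E_0 = \frac{1}{|V|}J$, one has $\langle J, E_u\rangle = |V|\,\delta_{u,0}$, so that $\vartheta(G) = \langle J, B\rangle = |V|\beta_0$ and similarly $\vartheta(G^c) = |V|\beta'_0$.

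Next I would evaluate $\operatorname{tr}(BB')$ in two different ways. First, from $E_uE_v = \delta_{uv}E_u$ one gets $\operatorname{tr}(BB') = \sum_u \beta_u\beta'_u\mu_u$. Second, entrywise: the non-edge constraint forces $B$ to vanish on all pairs in $R_i$ for $i\notin M$ (edges of $G$), and forces $B'$ to vanish on pairs in $R_i$ for $i\in M\setminus\{0\}$ (edges of $G^c$). Thus $B$ and $B'$ have disjoint off-diagonal supports among the relations, so all off-diagonal contributions to $\sum_{p,q}B_{pq}B'_{pq}$ vanish. On the diagonal, any Bose-Mesner matrix is a scalar multiple of $I$ on its diagonal, and the trace condition pins that scalar to $1/|V|$ for both $B$ and $B'$, giving a diagonal contribution of $|V|\cdot(1/|V|)^2 = 1/|V|$. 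Equating the two computations gives $\sum_u\beta_u\beta'_u\mu_u = 1/|V|$. Since all summands are non-negative and $\mu_0 = 1$, dropping all terms with $u\ge 1$ yields $\beta_0\beta'_0 \le 1/|V|$, and multiplying by $|V|^2$ gives $\vartheta(G)\vartheta(G^c) \le |V|$.

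The step I expect to be most delicate is the entrywise analysis of $\operatorname{tr}(BB')$: keeping straight which relations $R_i$ correspond to edges of $G$ versus $G^c$, and verifying that the feasibility constraints of the LP in Theorem~\ref{thm:schlp} really do translate into the claimed complementary support restrictions on $B$ and $B'$. Once that identification is made cleanly, the rest is a short calculation in the Bose-Mesner algebra, and combining with Lovász's general lower bound converts the inequality $\vartheta(G)\vartheta(G^c)\le |V|$ into equality.
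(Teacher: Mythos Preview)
The paper does not prove Theorem~\ref{thm:schprodthm}; it is quoted as a result of Schrijver~\cite{Sch1979} and is used as a black box in the proofs of Theorems~\ref{thm:exactL=1} and~\ref{thm:genlovaszbd}. So there is no in-paper argument to compare your proposal against.

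That said, your argument is correct and is essentially the standard proof. A couple of small remarks. First, your one-line invocation of ``Lov\'asz's general inequality $\vartheta(G)\vartheta(G^c)\ge |V|$'' deserves a word of justification: it follows from the multiplicativity $\vartheta(G\boxtimes H)=\vartheta(G)\vartheta(H)$ together with the observation that the diagonal $\{(v,v):v\in V\}$ is independent in $G\boxtimes G^c$, so $\vartheta(G)\vartheta(G^c)=\vartheta(G\boxtimes G^c)\ge \alpha(G\boxtimes G^c)\ge |V|$. Second, the step you flagged as delicate is in fact clean once you write $B=\sum_i c_iA_i$ in the adjacency basis: feasibility in the linear program of Theorem~\ref{thm:schlp} forces $c_i=0$ for $i\notin M$ and, for the complement, $c'_i=0$ for $i\in M\setminus\{0\}$; since the supports of distinct $A_i$ are disjoint, the off-diagonal Hadamard product of $B$ and $B'$ vanishes entrywise, and the diagonal contribution is exactly $|V|\,c_0c'_0 = 1/|V|$ as you computed. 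The rest of your calculation in the idempotent basis is straightforward and yields $\vartheta(G)\vartheta(G^c)\le |V|$.
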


We first prove Theorem~\ref{thm:exactL=1}. 

\begin{proof}[Proof of Theorem~\ref{thm:exactL=1}]
By Theorem~\ref{thm:schlp}, the Lov\'asz number $\vartheta(G(n, k, L))$ is the solution of the following linear program (with $M = \{0, k-\ell\}$):
\begin{equation}\label{eqn:L=1schlp}
\begin{aligned}
\vartheta(G(n, k, L)) &= \max \quad 1 +a_{k-\ell}\\
 & \text{s.t.}\, a_{k-\ell}\frac{1}{\binom{k}{k-\ell}\binom{n-k}{k-\ell}}\sum_{j=0}^{k-\ell}(-1)^j\binom{u}{j}\binom{k-u}{k-\ell-j}\binom{n-k-u}{k-\ell-j} \ge -1, \, u\in \{0, 1, \ldots, k\}.
\end{aligned}
\end{equation}
To satisfy the $u=\ell+1$ constraint when $n$ is sufficiently large, we need 
\[a_{k-\ell} \le \frac{\binom{k}{k-\ell}\binom{n-k}{k-\ell}}{\sum_{j=0}^{k-\ell}(-1)^{j+1}\binom{\ell+1}{j}\binom{k-\ell-1}{k-\ell-j}\binom{n-k-\ell-1}{k-\ell-j}} = \frac{\binom{k}{k-\ell}}{(k-\ell)(\ell+1)} n + O(1).\]
We set $a_{k-\ell}$ to be the right-hand side of the previous inequality. If $u \le \ell$, then the leading order term of $\sum_{j=0}^{k-\ell}(-1)^j\binom{u}{j}\binom{k-u}{k-\ell-j}\binom{n-k-u}{k-\ell-j}$ is the $j = 0$ term, which implies
\[ \frac{1}{\binom{k}{k-\ell}\binom{n-k}{k-\ell}}\sum_{j=0}^{k-\ell}(-1)^j\binom{u}{j}\binom{k-u}{k-\ell-j}\binom{n-k-u}{k-\ell-j} = \frac{\binom{k-u}{k-\ell}}{\binom{k}{k-\ell}} - O\left(\frac{1}{n}\right) =  O(1), \] so the choice of $a_{k-\ell}$ satisfies these inequalities when $n$ is large enough. 

If $u > \ell + 1$, then the leading order term of $\sum_{j=0}^{k-\ell}(-1)^j\binom{u}{j}\binom{k-u}{k-\ell-j}\binom{n-k-u}{k-\ell-j}$ is the $j = u-\ell$ term, which implies
\[ \frac{1}{\binom{k}{k-\ell}\binom{n-k}{k-\ell}}\sum_{j=0}^{k-\ell}(-1)^j\binom{u}{j}\binom{k-u}{k-\ell-j}\binom{n-k-u}{k-\ell-j} = (-1)^{u-\ell_1}O\left(\frac{1}{n^{u-\ell}}\right), \]
so with the choice of $a_{k-\ell}$, we have 
\[a_{k-\ell} \frac{1}{\binom{k}{k-\ell}\binom{n-k}{k-\ell}}\sum_{j=0}^{k-\ell}(-1)^j\binom{u}{j}\binom{k-u}{k-\ell-j}\binom{n-k-u}{k-\ell-j} = o(1),\]
implying that the inequalities are satisfied as $n\rightarrow \infty$. 

The expression for $\vartheta(G(n, k, L'))$ follows from the expression for $\vartheta(G(n, k, L))$ and  Theorem~\ref{thm:schprodthm}. 
\end{proof}

We now prove Theorem~\ref{thm:genlovaszbd} by analyzing the preceding linear program. In the proof of Theorem~\ref{thm:exactL=1}, the important inequality we needed to satisfy was the $u=\ell_1+1$ inequality. For general $L=\{\ell_1, \ell_2, \ldots, \ell_s\}$, the most important inequalities to satisfy will be the inequalities obtained when $u \in \{\ell_1+1, \ell_2+1, \ldots, \ell_s+1\}$. We note that related arguments for specific $L$-systems were given by Bukh and Cox~\cite[Lemma 12 (2)]{BuCo2019} and by Schrijver~\cite{Sch1978}) in the proof for the $t$-intersecting Erd\H{o}s-Ko-Rado theorem for large $n$. 

\begin{proof}[Proof of Theorem~\ref{thm:genlovaszbd}]
Let $k$ and $L = \{\ell_1, \ell_2, \ldots, \ell_s\}\subseteq[0, k-1]$ be given and fixed, with $n\rightarrow \infty$ and $\ell_1 < \ell_2 < \cdots < \ell_s$. We will show that \[\vartheta(G(n, k, L)) \ge \prod_{i=1}^{b}m_i!\cdot\frac{\binom{k}{k-\ell_1}\binom{k-\ell_1-1}{k-\ell_2}\cdots \binom{k-\ell_{s-1}-1}{k-\ell_s}}{\prod_{i=1}^s(\ell_i+1)(k-\ell_i)} n^{s} - cn^{s-1}\] for some constant $c$ depending on $k$ and $L$ but not on $n$. This is sufficient to prove Theorem~\ref{thm:genlovaszbd}, as the same argument with $L^C = [0, k-1]\setminus\{\ell_1, \ell_2, \ldots, \ell_s\} := \{\ell_1', \ell_2', \ldots, \ell_{k-s}'\}$ implies that \[\vartheta(G(n, k, L^C)) \ge \prod_{i=1}^{b'}m_i!\cdot\frac{\binom{k}{k-\ell_1'}\binom{k-\ell_1'-1}{k-\ell_2'}\cdots \binom{k-\ell_{k-s-1}'-1}{k-\ell_{k-s}'}}{\prod_{i=1}^{k-s}(\ell_i'+1)(k-\ell_i')}n^{k-s} - c'n^{k-s-1},\] so Theorem~\ref{thm:schprodthm} and Claim~\ref{clm:multto1/k!} (proved below) imply that \[\vartheta(G(n, k, L)) = \prod_{i=1}^{b}m_i!\cdot\frac{\binom{k}{k-\ell_1}\binom{k-\ell_1-1}{k-\ell_2}\cdots \binom{k-\ell_{s-1}-1}{k-\ell_s}}{\prod_{i=1}^s(\ell_i+1)(k-\ell_i)}n^{s} + O(n^{s-1}).\]

\begin{claim}\label{clm:multto1/k!}
Let $k$ be a positive integer, let $L = \{\ell_1, \ell_2, \ldots, \ell_s\} \subset [0, k-1]$  with  $\ell_1 < \ell_2 < \cdots < \ell_s$ and set $L^C:=[0, k-1]\setminus L = \{\ell_1', \ell_2', \ldots, \ell_{k-s}'\}$, with $\ell_1' < \ell_2' < \cdots < \ell_{k-s}'$. Suppose $L$ contains $b$ full runs of consecutive integers with lengths $m_i$ for $1\le i \le b$ and $L^C$ contains $b'$ full runs of consecutive integers of lengths $m_j'$ for $1\le j\le b'$. Then, 
\[\prod_{i=1}^{b}m_i!\cdot\frac{\binom{k}{k-\ell_1}\binom{k-\ell_1-1}{k-\ell_2}\cdots \binom{k-\ell_{s-1}-1}{k-\ell_s}}{\prod_{i=1}^s(\ell_i+1)(k-\ell_i)} \cdot \prod_{j=1}^{b'}m_j'!\cdot\frac{\binom{k}{k-\ell_1'}\binom{k-\ell_1'-1}{k-\ell_2'}\cdots \binom{k-\ell_{k-s-1}'-1}{k-\ell_{k-s}'}}{\prod_{i=1}^{k-s}(\ell_i'+1)(k-\ell_i')} = \frac{1}{k!}.\]
\end{claim}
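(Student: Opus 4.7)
The plan is to expand both binomial numerators using the run structure of $L$ and $L^C$ and then telescope. First observe that the two denominators in the claim jointly multiply to $\prod_{t=0}^{k-1}(t+1)(k-t)=(k!)^2$, since $\{\ell_i\}$ and $\{\ell'_j\}$ partition $[0,k-1]$. Writing $B_L:=\binom{k}{k-\ell_1}\binom{k-\ell_1-1}{k-\ell_2}\cdots\binom{k-\ell_{s-1}-1}{k-\ell_s}$ and $B_{L^C}$ analogously, it therefore suffices to show
\[\prod_{i=1}^b m_i!\cdot\prod_{j=1}^{b'}m'_j!\cdot B_L\cdot B_{L^C}=k!.\]

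The key observation is that whenever $\ell_i=\ell_{i-1}+1$ (i.e., inside an $L$-run), the binomial $\binom{k-\ell_{i-1}-1}{k-\ell_i}$ equals $\binom{k-\ell_i}{k-\ell_i}=1$. Thus $B_L$ reduces to a product of $b$ entry binomials, one per $L$-run: writing the $j$th $L$-run as $\{a_j,a_j+1,\ldots,a_j+m_j-1\}$, setting $a_0=m_0:=0$, $L_j:=k-a_{j-1}-m_{j-1}$, and $g_{j-1}:=a_j-a_{j-1}-m_{j-1}$ (so $g_{j-1}$ is the length of the $L^C$-run immediately preceding the $j$th $L$-run, or $0$), one obtains $B_L=\prod_{j=1}^b\binom{L_j}{g_{j-1}}$. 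Analogous quantities $L'_j$ and $g'_{j-1}$ give $B_{L^C}$.

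By the symmetry of the claim in $L$ and $L^C$, I may assume $0\in L$, so $g_0=0$. The alternating-run geometry gives the recurrences $L'_j=L_j-g_{j-1}$ and $L_{j+1}=L'_j-m_j$, and so by the identity $\binom{a}{b}\binom{a-b}{c}=a!/(b!\,c!\,(a-b-c)!)$,
\[\binom{L_j}{g_{j-1}}\binom{L'_j}{m_j}=\frac{L_j!}{g_{j-1}!\,m_j!\,L_{j+1}!}.\]
Pairing the binomials of $B_L$ and $B_{L^C}$ this way and multiplying, the $L_j!$ terms telescope: the numerator collapses to $L_1!=k!$, while the denominator is left with all the $m_j!$ and $g_{j-1}!$ factors (the latter being exactly the interior $L^C$-run lengths $m'_j!$) together with a single leftover factorial at the other boundary. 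A short case check on whether $k-1\in L$ (giving $b=b'+1$, leftover $m_b!$) or $k-1\in L^C$ (giving $b=b'$, leftover $m'_{b'}!$) shows that this leftover is precisely the $m_i!$ or $m'_j!$ missing from the earlier accounting, so the denominator is exactly $\prod m_i!\prod m'_j!$, which yields the claim. The only real subtlety is the bookkeeping of these two boundary cases, but both reduce to the same telescoping computation.
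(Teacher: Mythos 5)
Your proof is correct, and it takes a structurally different route from the paper's, though both exploit the same underlying duality between runs of $L$ and gaps in $L^C$. The paper, after the same $(k!)^2$ reduction, expands each of $B_L$ and $B_{L^C}$ directly into factorials (so $B_L=k!/[(k-\ell_1)\cdots(k-\ell_s)\cdot(k-\ell_s-1)!\,(\ell_s-\ell_{s-1}-1)!\cdots\ell_1!]$), uses $\prod_i(k-\ell_i)\prod_j(k-\ell'_j)=k!$, and then reduces to the clean combinatorial identity $\prod_i m_i!\prod_j m'_j! = (\text{gap factorials of }L)\cdot(\text{gap factorials of }L^C)$, verified by directly matching each gap in $L^C$ (including the two boundary gaps $\ell'_1!$ and $(k-\ell'_{k-s}-1)!$) to a full run of $L$ and vice versa. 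You instead first observe that the within-run binomials $\binom{k-\ell_{i-1}-1}{k-\ell_i}$ are $1$, so $B_L$ compresses to one binomial per run, and then telescope paired binomials via $\binom{L_j}{g_{j-1}}\binom{L'_j}{m_j}=L_j!/(g_{j-1}!\,m_j!\,L_{j+1}!)$. Your telescope is clean but requires the WLOG $0\in L$ (legitimate, since the statement is symmetric in $L\leftrightarrow L^C$) plus a two-case boundary check depending on whether $k-1\in L$ or $k-1\in L^C$; the paper's direct matching avoids both the normalization and the case split, since its boundary gap factorials $\ell'_1!$ and $(k-\ell'_{k-s}-1)!$ automatically vanish (as $0!=1$) when those boundary runs are empty. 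Both arguments are short; yours is more explicitly combinatorial, the paper's is more purely algebraic. I verified your boundary accounting: in the case $k-1\in L$ the leftover binomial $\binom{L_b}{g_{b-1}}$ contributes $L_b!/(g_{b-1}!\,m_b!)$ with $L_b-g_{b-1}=m_b$, and in the case $k-1\in L^C$ the tail $L_{b+1}!$ equals $m'_{b'}!$; both give exactly $k!/(\prod_i m_i!\prod_j m'_j!)$ as needed.
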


\begin{proof}[Proof of Claim~\ref{clm:multto1/k!}]
Note 
\[\prod_{i=1}^s(\ell_i+1)(k-\ell_i)\prod_{i=1}^{k-s}(\ell_i'+1)(k-\ell_i') = (k!)^2,\]
so it suffices to prove 
\[\prod_{i=1}^{b}m_i!\binom{k}{k-\ell_1}\binom{k-\ell_1-1}{k-\ell_2}\cdots \binom{k-\ell_{s-1}-1}{k-\ell_s}\prod_{j=1}^{b'}m_j'!\binom{k}{k-\ell_1'}\binom{k-\ell_1'-1}{k-\ell_2'}\cdots \binom{k-\ell_{k-s-1}'-1}{k-\ell_{k-s}'} = k!.\]
Now, 
\[\binom{k}{k-\ell_1}\binom{k-\ell_1-1}{k-\ell_2}\cdots \binom{k-\ell_{s-1}-1}{k-\ell_s}\]
\[=\frac{k!}{(k-\ell_1)(k-\ell_2)\cdots (k-\ell_s) \cdot (k-\ell_s-1)!(\ell_s - \ell_{s-1}-1)!\cdots \ell_1!}\]
and similarly
\[\binom{k}{k-\ell_1'}\binom{k-\ell_1'-1}{k-\ell_2'}\cdots \binom{k-\ell_{k-s-1}'-1}{k-\ell_{k-s}'} \]
\[= \frac{k!}{(k-\ell_1')(k-\ell_2')\cdots (k-\ell_{k-s}') \cdot (k-\ell_{k-s}'-1)!(\ell_{k-s}' - \ell_{k-s-1}'-1)!\cdots \ell_1'!} \]
Since $(k-\ell_1)\cdots (k-\ell_s)(k-\ell_{k-s}')\cdots (k-\ell_1') = k!$, it further suffices to prove that 
\[\prod_{i=1}^{b}m_i!\prod_{j=1}^{b'}m_j'! = (k-\ell_s-1)!(\ell_s - \ell_{s-1}-1)!\cdots \ell_1! \cdot (k-\ell_{k-s}'-1)!(\ell_{k-s}' - \ell_{k-s-1}'-1)!\cdots \ell_1'!.\]
We show \[\prod_{i=1}^{b}m_i! = (k-\ell_{k-s}'-1)!(\ell_{k-s}' - \ell_{k-s-1}'-1)!\cdots \ell_1'!\] and
 \[\prod_{i=1}^{b'}m_i'! = (k-\ell_{s}-1)!(\ell_{s} - \ell_{s-1}-1)!\cdots \ell_1!,\] which together complete the proof of the claim.  Indeed, consider two consecutive entries $\ell_i'$ and $\ell_{i+1}'$ in $L'$ (and for the purpose of the following argument, set $\ell_0'=-1$ and $\ell_{k-s+1}' = k$). If $\ell_{i+1}' = \ell_{i}'+1$, then $(\ell_{i+1}'-\ell_{i}'-1)!=0!=1$. Otherwise, the integers  $[\ell_{i}'+1, \ell_{i+1}'-1]$ correspond to a full run of consecutive integers in $L$ of length $(\ell_{i+1}'-1)-(\ell_{i}'+1) +1 = \ell_{i+1}'-\ell_{i}' - 1$, and each full run of consecutive integers in $L$ can be obtained in this way. The same argument with full runs in $L'$ and consecutive entries in $L$ gives the other equality. 
\end{proof}

We have $E(G(n, k, L)) = \{\{X, Y\} : X, Y\in \binom{[n]}{k},  |X\cap Y| \notin \{\ell_1, \ell_2, \ldots, \ell_s\}\}$. We have $|X\cap Y| = t$ if and only if $\frac12 |X\Delta Y| = k-t$, so $XY\in E(G(n, k, L))$ if and only if $XY \notin \cup_{i\in M}R_i$, where $M = \{0, k-\ell_1, \ldots, k-\ell_s\}$. Therefore, by Theorem~\ref{thm:schlp}, the Lov\'asz number $\vartheta(G(n, k, L))$ is the solution of the following linear program (taking $M = \{0, k-\ell_1, \ldots, k-\ell_s\}$):
\begin{equation}\label{eqn:Lschlp}
\begin{aligned}
\vartheta(G(n, k, L)) &= \max \quad 1 + \sum_{i=1}^sa_{k-\ell_i}\\
				& \text{s.t.}\, \sum_{i=1}^sa_{k-\ell_i}\frac{1}{\binom{k}{k-\ell_i}\binom{n-k}{k-\ell_i}}\sum_{j=0}^{k-\ell_i}(-1)^j\binom{u}{j}\binom{k-u}{k-\ell_i-j}\binom{n-k-u}{k-\ell_i-j} \ge -1, \, u\in \{0, 1, \ldots, k\}.
\end{aligned}
\end{equation}

Let $P$ be the $s\times{s}$ matrix which has $(i, j)$-entry $P_{(i, j)} = P_{k-\ell_j}^{\ell_{i}+1}$, so that \[P = \begin{pmatrix}P_{k-\ell_1}^{\ell_1+1} & P_{k-\ell_2}^{\ell_1+1}  & \ldots & P_{k-\ell_s}^{\ell_1+1} \\ \vdots & \vdots & &\vdots \\ P_{k-\ell_1}^{\ell_s+1} & P_{k-\ell_2}^{\ell_s+1} & \ldots  & P_{k-\ell_s}^{\ell_s+1}\end{pmatrix}.\]

Let ${\bf a}$ be the $s\times 1$ column vector with ${\bf a}_{i} = \frac{a_{k-\ell_i}}{\binom{k}{k-\ell_i}\binom{n-k}{k-\ell_i}}$.  Then, the inequalities from \eqref{eqn:Lschlp} with $u\in \{\ell_1+1, \ldots, \ell_s+1\}$ can be stated as 
\[P{\bf a} \ge \begin{pmatrix}-1 \\ -1 \\ \vdots \\ -1\end{pmatrix}.\]

Let ${\bf v}$ be the $(s\times 1)$ column vector with 
\[{\bf v} = P^{-1}\begin{pmatrix}0 \\ 0 \\ \vdots \\ 0 \\ -1\end{pmatrix}.\]
It is not \emph{a priori} clear that such a ${\bf v}$ exists, but we will show subsequently in Lemma~\ref{detPs} that $P$ is invertible as $n\rightarrow \infty$. 
We claim that as $n\rightarrow \infty$, the following is a feasible solution to this linear program:
\begin{equation}\label{eqn:solnLp}
a_{k-\ell_i} = \binom{k}{k-\ell_i}\binom{n-k}{k-\ell_i}{\bf v}_i. 
\end{equation}
By construction, any such solution (if the vector ${\bf v}$ exists) will satisfy the inequalities for $u\in \{\ell_1+1, \ell_2+1, \ldots, \ell_s+1\}$ automatically. We will subsequently verify in Lemma~\ref{lem:otherineqs} that the solution in \eqref{eqn:solnLp} also satisfies the other inequalities as $n\rightarrow \infty$. 

We begin with a simple, but useful observation. 
\begin{claim}\label{clm:Pordmag}
Let $k, L, u$ be as before. As $n\rightarrow \infty$, 
\begin{align*}
P_{k-\ell_i}^u &= \sum_{j=0}^{k-\ell_i}(-1)^j\binom{u}{j}\binom{k-u}{k-\ell_i-j}\binom{n-k-u}{k-\ell_i-j}\\
& = \begin{cases}\frac{1}{(k-\ell_i)!}\binom{k-u}{k-\ell_i}n^{k-\ell_i} - O(n^{k-\ell_i-1}),& u < \ell_i \\ (-1)^{u-\ell_i}\binom{u}{\ell_i}\frac{1}{(k-u)!}n^{k-u} + (-1)^{u-\ell_i+1}O(n^{k-u-1}), & u\ge \ell_i.\end{cases}\\
\end{align*}

\end{claim}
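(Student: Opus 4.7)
The plan is to identify which index $j$ in the alternating sum contributes the leading power of $n$, then show every other index is absorbed into the error. Two observations drive the argument. First, for any fixed nonnegative integer $m$, $\binom{n-k-u}{m} = \frac{n^m}{m!} + O(n^{m-1})$ as $n\to\infty$, so the $j$-th summand of $P_{k-\ell_i}^u$ is a polynomial in $n$ of degree at most $k - \ell_i - j$. Second, $\binom{k-u}{k-\ell_i - j}$ vanishes whenever $k - \ell_i - j > k - u$, i.e.\ whenever $j < u - \ell_i$.

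For the case $u < \ell_i$, the second observation imposes no restriction on $j$, so the $j=0$ term contributes the highest power of $n$. Evaluating it:
\[
(-1)^0\binom{u}{0}\binom{k-u}{k-\ell_i}\binom{n-k-u}{k-\ell_i} = \frac{1}{(k-\ell_i)!}\binom{k-u}{k-\ell_i}n^{k-\ell_i} + O(n^{k-\ell_i-1}).
\]
For each $j\ge 1$, the term has degree at most $k-\ell_i - 1$ in $n$, so it contributes only to the error, giving the claimed asymptotic formula.

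For the case $u \ge \ell_i$, the second observation forces every term with $j < u - \ell_i$ to vanish, so the first nonzero term occurs at $j = u - \ell_i$, where $\binom{k-u}{k-\ell_i - j} = \binom{k-u}{k-u} = 1$. That summand equals
\[
(-1)^{u-\ell_i}\binom{u}{u-\ell_i}\binom{n-k-u}{k-u} = (-1)^{u-\ell_i}\binom{u}{\ell_i}\frac{n^{k-u}}{(k-u)!} + (-1)^{u-\ell_i+1}O(n^{k-u-1}).
\]
For $j > u - \ell_i$, the degree in $n$ is at most $k - \ell_i - j \le k - u - 1$, so these are absorbed into the error. This matches the stated formula, and the boundary case $u = \ell_i$ agrees in both expressions (both give $\frac{n^{k-\ell_i}}{(k-\ell_i)!}$ as leading term), so either case may be used there.

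I expect no real obstacle: the proof is essentially a degree-count plus a vanishing observation, and the signs are forced by the alternating structure of the sum. The only point demanding a little care is tracking that the error in the $u \ge \ell_i$ case carries the opposite sign of the leading coefficient (which comes from expanding $\binom{n-k-u}{k-u}$ and from the fact that the next surviving index is $j = u - \ell_i + 1$, contributing with sign $(-1)^{u-\ell_i+1}$); this gives the $(-1)^{u-\ell_i+1}O(n^{k-u-1})$ form recorded in the claim.
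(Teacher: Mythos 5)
Your proof is correct and follows essentially the same route as the paper: both identify the leading contribution as the summand at the smallest index $j$ for which $\binom{k-u}{k-\ell_i-j}$ does not vanish (namely $j=0$ when $u<\ell_i$ and $j=u-\ell_i$ when $u\ge\ell_i$), and absorb all other summands into the error by degree count. Your extra remarks on the sign of the $O(n^{k-u-1})$ error and the consistency at $u=\ell_i$ are accurate and fill in detail the paper leaves implicit.
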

\begin{proof}[Proof of Claim~\ref{clm:Pordmag}]
As $n\rightarrow \infty$, $\binom{n-k-u}{k-\ell_i - j} = \frac{1}{(k-\ell_i-j)!}n^{k-\ell_i-j} - O(n^{k-\ell_i-j-1})$.  Hence, the leading order term of $P_{k-\ell_i}^u$ is the $n^{k-\ell_i-j}$ term, where $j \ge 0$ is the smallest nonnegative integer such that $\binom{k-u}{k-\ell_i - j} > 0$, \textit{i.e.} such that $j \ge u-\ell_i$. The claim follows.
 \end{proof}

Claim~\ref{clm:Pordmag} has two important consequences. First, we use Claim~\ref{clm:Pordmag} to show that $P$ is invertible as $n\rightarrow \infty$. In fact, we determine the leading order term of $\text{det}(P)$. 

\begin{lemma}\label{detPs}
Let $k$ and $L$ be  as before. As $n\rightarrow \infty$, 
\[\text{det}(P) = (-1)^{s}Cn^{sk-(\ell_1+\ell_2+\cdots + \ell_s)-s} + O(n^{sk-(\ell_1+\cdots+\ell_s)-s-1}),\]
where
\[C =  \frac{\prod_{i=1}^s(\ell_i+1)}{ \prod_{i=1}^bm_i!  \prod_{i=1}^s(k-\ell_i-1)!}.\]
In particular, $P$ is invertible as $n\rightarrow \infty$ and the solution \eqref{eqn:solnLp} exists.
\end{lemma}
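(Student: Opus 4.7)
The plan is to expand $\det P$ via Leibniz and identify precisely which permutations contribute at the leading order in $n$, using Claim~\ref{clm:Pordmag} with $u = \ell_i + 1$ to read off the order of each entry $P_{(i,j)} = P_{k - \ell_j}^{\ell_i + 1}$. That order equals $n^{k - \ell_i - 1}$ whenever $\ell_j \le \ell_i + 1$ (i.e.\ $j \le i$, or $j = i + 1$ with $\ell_{i+1} = \ell_i + 1$), and the strictly smaller $n^{k - \ell_j}$ otherwise. Consequently $\prod_i P_{(i, \sigma(i))}$ has order at most $n^{\sum_i (k - \ell_i - 1)} = n^{sk - \sum_i \ell_i - s}$, with equality precisely for permutations $\sigma$ satisfying, for every $i$, either $\sigma(i) \le i$ or ($\sigma(i) = i + 1$ and $\ell_{i+1} = \ell_i + 1$). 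Call such $\sigma$ \emph{leading}.

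Next I would show that any leading $\sigma$ preserves each full run of $L$. Label the runs by intervals $I_j = [b_{j-1} + 1, b_j]$ with $b_0 = 0$ and $b_b = s$. For $i = b_j$ with $j < b$, the gap $\ell_{b_j + 1} > \ell_{b_j} + 1$ forces $\sigma(b_j) \le b_j$, and combined with $\sigma(i) \le i + 1$ for the other $i$ this yields $\sigma(\{1, \ldots, b_j\}) \subseteq \{1, \ldots, b_j\}$ by an easy induction on $j$; since $\sigma$ is a bijection, it must permute each $I_j$ separately. Thus the leading part of $\det P$ factors as $\prod_{j=1}^b$ (leading part of $\det(P|_{I_j})$), and I compute each factor.

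For a single run $I_j$ of length $m_j$ whose $\ell$-values are $p, p+1, \ldots, p + m_j - 1$, I extract from each allowed entry the leading coefficient $(-1)^{\ell_i + 1 - \ell_{\sigma(i)}}\binom{\ell_i + 1}{\ell_{\sigma(i)}}/(k - \ell_i - 1)!$, a formula that is uniform across the two relevant sub-cases of Claim~\ref{clm:Pordmag}. Because $\sigma$ permutes $I_j$, the total sign exponent $\sum_{i \in I_j}(\ell_i + 1 - \ell_{\sigma(i)})$ collapses to $m_j$, so the leading coefficient of $\det(P|_{I_j})$ becomes $(-1)^{m_j} / \prod_{i \in I_j}(k - \ell_i - 1)!$ times $\det N_j$, where $(N_j)_{i' j'} = \binom{p + i' + 1}{p + j'}$ for $i', j' \in \{0, \ldots, m_j - 1\}$ (with the usual $\binom{a}{b} = 0$ for $b > a$, so permutations that are not leading contribute $0$ automatically to $\det N_j$).

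The core step, and the place I expect the work to concentrate, is the identity $\det N_j = \prod_{i = 0}^{m_j - 1}(p + i + 1) / m_j!$. I would strip the diagonal factors by writing $N_j = \operatorname{diag}((p + i' + 1)!) \cdot \tilde M \cdot \operatorname{diag}(1/(p + j')!)$ with $\tilde M_{i'j'} = 1/(i' - j' + 1)!$ under the convention $1/n! = 0$ for $n < 0$; this reduces the claim to the $p$-independent identity $\det \tilde M = 1/m_j!$, which I prove by exhibiting the explicit factorization $\tilde M = L U$ with $L_{i' j'} = (j' + 1)/[(i' + 1)(i' - j')!]$ on and below the diagonal and $U$ upper bidiagonal with $U_{i'i'} = 1/(i' + 1)$, $U_{i', i'+1} = 1$. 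The verification $(LU)_{i' j'} = j'/[(i'+1)(i'-j'+1)!] + 1/[(i'+1)(i'-j')!] = 1/(i'-j'+1)!$ is a one-line check using $j' + (i' - j' + 1) = i' + 1$, after which $\det \tilde M = \det U = \prod_{i'} 1/(i'+1) = 1/m_j!$ is immediate. Multiplying the per-run leading coefficients and using $\sum_j m_j = s$ then assembles the claimed sign $(-1)^s$ and constant $C$; since $C > 0$, $P$ is invertible for large $n$.
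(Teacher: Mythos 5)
Your proposal is correct, and it takes a genuinely different route from the paper at the key step. Both start the same way: Leibniz expansion of $\det P$, the observation via Claim~\ref{clm:Pordmag} that the entry $P_{(i,j)}=P_{k-\ell_j}^{\ell_i+1}$ has order $n^{k-\ell_i-1}$ exactly when $\ell_j\le\ell_i+1$, the resulting characterization of leading permutations (this is Claim~\ref{clm:Sigmarestrict} in the paper), and the reduction to a product over full runs. The divergence is in how the per-run leading coefficient is evaluated. The paper's Lemma~\ref{lem:consecrun} proceeds by strong induction on the run length $p$: it conditions on the index $j$ with $\lambda(m+j)=m+1$, observes that the constraints then force $\lambda(m+i)=m+i+1$ for $i<j$, applies the inductive hypothesis to the tail $[m+j+1,m+p]$, and sums over $j$, so the $1/p!$ eventually emerges from the alternating-sum identity $\sum_{j=1}^p(-1)^{j-1}/(j!(p-j)!)=1/p!$. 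You instead extract a uniform leading coefficient $(-1)^{\ell_i+1-\ell_{\sigma(i)}}\binom{\ell_i+1}{\ell_{\sigma(i)}}/(k-\ell_i-1)!$ from every admissible entry (this unification works because for $j=i+1$ with $\ell_{i+1}=\ell_i+1$ the two branches of \eqref{eqn:Psijleadingterms} coincide), collapse the sign via $\sum_{i\in I_j}(\ell_i+1-\ell_{\sigma(i)})=m_j$, and reduce the whole per-run computation to the determinant of the shifted Pascal matrix $N_j$ with $(N_j)_{i'j'}=\binom{p+i'+1}{p+j'}$. Stripping diagonal factorials reduces that to $\det\tilde M$ with $\tilde M_{i'j'}=1/(i'-j'+1)!$, which your explicit LU factorization evaluates to $1/m_j!$; I checked the factorization, the diagonal of $L$ is all ones, and $\det U=1/m_j!$, so the identity $\det N_j=\prod_{i'}(p+i'+1)/m_j!$ holds. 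The two approaches are comparable in length; yours isolates a clean, self-contained combinatorial determinant identity and makes the provenance of the $1/m_j!$ factor more transparent, while the paper's induction stays closer to the linear-program structure and avoids introducing the auxiliary matrices $N_j$, $\tilde M$. Either is acceptable.
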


\begin{proof}[Proof of Lemma~\ref{detPs}]
By Claim~\ref{clm:Pordmag}, we have that
\begin{equation}\label{eqn:Psijleadingterms}
P_{(i, j)} = P_{k-\ell_j}^{\ell_i+1} = \begin{cases}\frac{1}{(k-\ell_j)!}\binom{k-\ell_i-1}{k-\ell_j}n^{k-\ell_j} - O(n^{k-\ell_j-1}),& i < j \\ (-1)^{\ell_i+1-\ell_j}\binom{\ell_i+1}{\ell_j}\frac{1}{(k-\ell_i-1)!}n^{k-\ell_i-1} + (-1)^{\ell_i-\ell_j}O(n^{k-\ell_i-2}), & i \ge j.\end{cases}
\end{equation}
Recall the Leibniz formula for the determinant of an $n\times n$ matrix $A=(a_{ij})_{1\le i, j\le n}$:
\[\text{det}(A) = \sum_{\sigma\in S_n}\text{sgn}(\sigma)\prod_{i=1}^na_{i\sigma(i)}.\]
Recall $\text{sgn}$ is the permutation sign function, so that $\text{sgn}(\sigma) = (-1)^{|\text{inv}(\sigma)|}$, where \[\text{inv}(\sigma) = \{(i, j): i < j,\, \sigma(i) > \sigma(j)\}\] is the set of inversions of $\sigma$. By \eqref{eqn:Psijleadingterms}, in row $i$ the highest order of magnitude term is of the order $n^{k-\ell_{i}-1}$, so the highest order terms of $\text{det}(P)$ are of the order $n^{\sum_{i=1}^s(k-\ell_i-1) } = n^{sk-(\ell_1+\cdots+\ell_s)-s}$. Let $\Sigma$ be the set of all permutations $\sigma \in S_s$ which have the property that \[\prod_{i=1}^sP_{(i,\sigma(i))} = \Theta(n^{sk-(\ell_1+\ell_2+\cdots+\ell_s)-s}).\] These are the highest order terms in the Leibniz formula for $\text{det}(P)$. We show the following result, which will prove Lemma~\ref{detPs}.
\begin{equation}\label{detPshighestorder}
\sum_{\sigma\in\Sigma}\text{sgn}(\sigma)\prod_{i=1}^sP_{(i,\sigma(i))}=(-1)^s\frac{\prod_{i=1}^s(\ell_i+1)}{ \prod_{i=1}^bm_i!  \prod_{i=1}^s(k-\ell_i-1)!}n^{sk-(\sum_{i=1}^s\ell_i)-s} + O(n^{sk-(\sum_{i=1}^s\ell_i)-s-1}).
\end{equation}
We prove a claim showing that the permutations in $\Sigma$ are of a rather restricted type. 
\begin{claim}\label{clm:Sigmarestrict}
Let $\sigma\in \Sigma$. Then, for any $1\le i\le s-1$, $\sigma(i) \le i+1$. Furthermore, if $\ell_i$ is the last integer in its full run of consecutive integers in $L$, then $\sigma(i) \le i$.
\end{claim}
\begin{proof}[Proof of Claim~\ref{clm:Sigmarestrict}]
Let $\sigma \in S_s$ be a permutation with $\sigma(i) \ge i+2$ for some $1\le i\le s-2$. Then, $P_{(i,\sigma(i))} = O(n^{k-\ell_{i+2}}) = o(n^{k-\ell_i-1})$, so the term in the Leibniz product formula corresponding to $\sigma$ will be of an order smaller than $n^{sk-(\ell_1+\ell_2+\cdots+\ell_s)-s}$. 

Similarly, if $\sigma$ is a permutation with $\sigma(i) \ge i+1$, where $\ell_i$ is an integer which is last in its full run of consecutive integers, then $P_{(i,\sigma(i))} = O(n^{k-\ell_{i+1}}) = o(n^{k-\ell_i-1})$, since $\ell_{i+1} \ge \ell_i + 2$, so again the term in the Leibniz product formula corresponding to $\sigma$ will be of smaller order. 
\end{proof}
Claim~\ref{clm:Sigmarestrict} implies that every $\sigma \in \Sigma$ can only permute elements within each full run; that is, there are no $\ell_i$ and $\ell_j$ in different full runs of consecutive integers in $L$ with $\sigma(i)=j$. We will therefore consider runs of consecutive integers in $L$. Let $L_m = \{\ell_{m+1}, \ell_{m+2}, \ldots, \ell_{m+p}\}$ be a (not necessarily full) run of $p$ consecutive integers in $L$ with $\ell_{m+j+1} = \ell_{m+j}+1$ for $1\le j\le p-1$. We prove a result on the terms of the matrix $P$ coming from $L_m$ which will ultimately allow us to prove \eqref{detPshighestorder}. 
\begin{lemma}\label{lem:consecrun}
Let $L_m = \{\ell_{m+1}, \ell_{m+2}, \ldots, \ell_{m+p}\}$ be a run of $p$ consecutive integers in $L$ with $\ell_{m+j+1} = \ell_{m+j}+1$ for $1\le j\le p-1$. Let $\Lambda$ be the set of permutations $\lambda$ on the set $\{m+1, \ldots, m+p\}$ with $\lambda(i)\le i+1$ for $m+1\le i\le m+p-1$. Then, 
\[\sum_{\lambda\in \Lambda}\text{sgn}(\lambda)\prod_{i=m+1}^{m+p}P_{(i,\lambda(i))} = (-1)^p\frac{\prod_{i=1}^p(\ell_{m+i}+1)}{p!\prod_{i=1}^p(k-\ell_{m+i}-1)!}n^{pk-(\sum_{i=1}^p\ell_{m+i})-p} +O(n^{pk-(\sum_{i=1}^p\ell_{m+i})-p-1}). \]
\end{lemma}
\begin{proof}[Proof of Lemma~\ref{lem:consecrun}]
We proceed by strong induction on $p$. The case $p=1$ follows from \eqref{eqn:Psijleadingterms}, as $\lambda(m+1) = m+1$, so $\text{sgn}(\lambda) = 1$ and $P_{(m+1, \lambda(m+1))} = (-1)\frac{\ell_{m+1}+1}{(k-\ell_{m+1}-1)!}n^{k-\ell_{m+1}-1} +O(n^{k-\ell_{m+1}-2})$.  
Now suppose the statement of the lemma holds for all $1\le r\le p-1$; that is, assume that the lemma holds for any run of $r$ consecutive integers $\{\ell_{y}, \ell_{y+1}, \ldots, \ell_{y+r-1}\}$ in $L$. We show that the statement holds for $L_m$ under this assumption. Divide the permutations $\lambda$ in $\Lambda$ up according to the value of  $j$ for which $\lambda(m+j) = m+1$. It follows from the conditions on $\lambda$ that $\lambda(m+i) = m+i+1$ for $1\le i < j$, so that for these permutations $\lambda$, 
\[\prod_{i=m+1}^{m+j}P_{(i, \lambda(i))} =  \binom{\ell_{m+1}+j}{\ell_{m+1}}(-1)^{\ell_{m+j}+1-\ell_{m+1}} \prod_{i=1}^{j}\frac{1}{(k-\ell_{m+i}-1)!}n^{jk - \sum_{i=1}^j\ell_{m+i} - j} + O(n^{jk - \sum_{i=1}^j\ell_{m+i} - j})\] \[ = (-1)^{j}\frac{\prod_{i=1}^j(\ell_{m+i}+1)}{j!\prod_{i=1}^j(k-\ell_{m+i}-1)!}n^{jk - \sum_{i=1}^j\ell_{m+i} - j} + O(n^{jk - \sum_{i=1}^j\ell_{m+i} - j}).\]
There are $j-1$ inversions on $\lambda$ restricted to the set $\{m+1, \ldots, m+j\}$, given by the pairs $(m+1, m+j), \ldots (m+j-1, m+j)$. We now note that there are no inversions $(f, g)$ in $\lambda$ with $m+1\le f\le m+j$ and $m+j+1\le g\le m+p$, so that $|\text{inv}(\lambda)| = j-1 + |\text{inv}(\lambda_{[m+j+1, m+p]})|$, where $\lambda_{[m+j+1, m+p]}$ is the restriction of the permutation $\lambda$ to the set $\{m+j+1, \ldots, m+p\}$. Hence, by applying the inductive hypothesis to the set of consecutive integers $\{\ell_{m+j+1}, \ldots, \ell_{m+p}\}$, we obtain that
\[\sum_{\substack{\lambda\in \Lambda \\ \lambda(m+j) = m+1}} \text{sgn}(\lambda_{[m+j+1, m+p]})\prod_{i=m+j+1}^{m+p}P_{(i,\lambda(i))}\]
\[ = (-1)^{p-j}\frac{1}{(p-j)!}\frac{\prod_{i=j+1}^p(\ell_{m+i}+1)}{\prod_{i=j+1}^p(k-\ell_{m+i}-1)!}n^{(p-j)k-(\sum_{i=j+1}^p\ell_{m+i})-(p-j)} +O(n^{(p-j)k-(\sum_{i=j+1}^p\ell_{m+i})-(p-j)-1}),\] so that
\[ \sum_{\substack{\lambda\in \Lambda \\ \lambda(m+j) = m+1}} \text{sgn}(\lambda)\prod_{i=m+1}^{m+p}P_{(i,\lambda(i))} \] is equal to
\[(-1)^{p+j-1}\frac{1}{j!(p-j)!} \frac{\prod_{i=1}^p(\ell_{m+i}+1)}{\prod_{i=1}^p(k-\ell_{m+i}-1)!}n^{pk-(\ell_{m+1} + \cdots + \ell_{m+p})-p} +O(n^{pk-(\ell_{m+1} + \cdots + \ell_{m+p})-p-1}).\]
By summing over $1\le j\le p$, we obtain that 

\[\sum_{\lambda\in \Lambda}\text{sgn}(\lambda)\prod_{i=m+1}^{m+p}P_{(i,\lambda(i))}  = \sum_{j=1}^p\sum_{\substack{\lambda\in \Lambda \\ \lambda(m+j) = m+1}} \text{sgn}(\lambda)\prod_{i=m+1}^{m+p}P_{(i,\lambda(i))}\]
is equal to
\[ (-1)^p\left(\sum_{j=1}^p(-1)^{j-1}\frac{1}{j!(p-j)!}\right)\frac{\prod_{i=1}^p(\ell_{m+i}+1)}{\prod_{i=1}^p(k-\ell_{m+i}-1)!}n^{pk-(\sum_{i=1}^p\ell_{m+i})-p} +O(n^{pk-(\sum_{i=1}^p\ell_{m+i})-p-1})\]
\[= (-1)^p\frac{1}{p!}\frac{\prod_{i=1}^p(\ell_{m+i}+1)}{\prod_{i=1}^p(k-\ell_{m+i}-1)!}n^{pk-(\sum_{i=1}^p\ell_{m+i})-p} +O(n^{pk-(\sum_{i=1}^p\ell_{m+i})-p-1}),\]
completing the proof.
\end{proof}
We now deduce \eqref{detPshighestorder}, and thus Lemma~\ref{detPs}, from Lemma~\ref{lem:consecrun}. Denote the $b$ (full) runs of consecutive integers in $L$ by $r_1, \ldots ,r_b$, and the first integer of the $i$th full run by $\ell_{c_i}$, and recall that the length of the $i$th full run is denoted by $m_i$. By Claim~\ref{clm:Sigmarestrict}, each permutation $\sigma \in \Sigma$ permutes only the elements within each full run $r_j$. Furthermore, $|\text{inv}(\sigma)| = |\text{inv}(\sigma_{r_1})| + \cdots + |\text{inv}(\sigma_{r_b})|$, where $\sigma_{r_i}$ denotes the restriction of $\sigma$ to the $i$th full run. Therefore, by Lemma~\ref{lem:consecrun},
\[\sum_{\sigma\in\Sigma}\text{sgn}(\sigma)\prod_{i=1}^sP_{(i,\sigma(i))} = \prod_{i=1}^b\left(\sum_{\sigma_{r_i}}\text{sgn}(\sigma_{r_i})\prod_{a=c_i}^{c_i+m_i-1}P_{(a,\sigma_{r_i}(a))}\right)\]
\[= \prod_{i=1}^b(-1)^{m_i}\frac{\prod_{j=0}^{m_i-1}(\ell_{c_i+j}+1)}{m_i!\prod_{j=0}^{m_i-1}(k-\ell_{c_i+j}-1)!}n^{m_ik-(\sum_{j=0}^{m_i-1}\ell_{c_i+j})-m_i} +O(n^{m_ik-(\sum_{j=0}^{m_i-1}\ell_{c_i+j})-m_i-1}) \] 
\[ = (-1)^s\frac{\prod_{i=1}^s(\ell_i+1)}{ \prod_{i=1}^bm_i!  \prod_{i=1}^s(k-\ell_i-1)!}n^{sk-(\ell_1+\ell_2+\cdots + \ell_s)-s} + O(n^{sk-(\ell_1+\ell_2+\cdots + \ell_s)-s-1})\]
\end{proof}

The second consequence of Claim~\ref{clm:Pordmag} is that since $\binom{n-k}{k-\ell_i} = \frac{1}{(k-\ell_i)!}n^{k-\ell_i} - O(n^{k-\ell_i-1})$ as $n\rightarrow \infty$, if $a_{k-\ell_1}, a_{k-\ell_2}, \ldots, a_{k-\ell_s}$ satisfy the following inequalities \eqref{ell_1eqn}, \eqref{ell_meqn}, and \eqref{ell_seqn}, then they also satisfy \eqref{eqn:Lschlp} for sufficiently large $n$. 
 
\begin{equation}\label{ell_1eqn}
 \sum_{i=1}^s\frac{a_{k-\ell_i}}{\binom{k}{k-\ell_i}}\left(\binom{k-u}{k-\ell_i}-O\left(\frac{1}{n}\right)\right) \ge -1,\, \text{for } 0\le u\le \ell_1,
 \end{equation}
 
 \begin{equation}\label{ell_meqn}
 \sum_{i = 1}^m\frac{a_{k-\ell_i}}{\binom{k}{k-\ell_i}}\left(\binom{u}{\ell_i}\frac{(k-\ell_i)!}{(k-u)!}(-1)^{u-\ell_i}\frac{1}{n^{u-\ell_i}} - O\left(\frac{1}{n^{u+1-\ell_i}}\right)\right)  + \sum_{i=m+1}^s\frac{a_{k-\ell_i}}{\binom{k}{k-\ell_i}}\left(\binom{k-u}{k-\ell_i} -O\left(\frac{1}{n}\right)\right) \ge -1,
 \end{equation}
 
 \begin{equation}\label{ell_seqn}
 \sum_{i=1}^s\frac{a_{k-\ell_i}}{\binom{k}{k-\ell_i}}\left(\binom{u}{\ell_i}\frac{(k-\ell_i)!}{(k-u)!}(-1)^{u-\ell_i}\frac{1}{n^{u-\ell_i}} - O\left(\frac{1}{n^{u+1-\ell_i}}\right)\right) \ge -1,\, \text{for } \ell_s + 1 < u \le k.
 \end{equation}

Here \eqref{ell_meqn} must be satisfied for $\ell_m + 1 < u \le \ell_{m+1}, 1\le m \le s-1$. We show that the claimed solution for $a_{k-\ell_1}, \ldots, a_{k-\ell_s}$ satisfies the inequalities \eqref{ell_1eqn}, \eqref{ell_meqn}, \eqref{ell_seqn} as $n\rightarrow \infty$ for $u\in [0, k] \setminus \{\ell_1 + 1, \ldots, \ell_s+1\}$. We first describe the solution $a_{k-\ell_1}, a_{k-\ell_2}, \ldots, a_{k-\ell_s}$ more precisely.

\begin{lemma}\label{lem:ak-ellsispoly}
For each $1\le i\le s$, let $a_{k-\ell_i}$ be the expression given by \eqref{eqn:solnLp}. Then, $a_{k-\ell_i}$ is a rational expression in $n$ of degree $s-i+1$ with positive leading order term $C_i$, so that
\[a_{k-\ell_i} = C_in^{s-i+1} - O(n^{s-i}).\]
Furthermore, 
\begin{equation}\label{eqn:C1}
C_1 =  \prod_{i=1}^{b}m_i!\cdot\frac{\binom{k}{k-\ell_1}\binom{k-\ell_1-1}{k-\ell_2}\cdots \binom{k-\ell_{s-1}-1}{k-\ell_s}}{\prod_{i=1}^s(\ell_i+1)(k-\ell_i)}.
\end{equation}
\end{lemma}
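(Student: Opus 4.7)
The plan is to apply Cramer's rule to the linear system $P{\bf v} = -e_s$ defining ${\bf v}$. Let $\widetilde{P}_i$ denote the matrix obtained from $P$ by replacing its $i$-th column with $-e_s$, so that ${\bf v}_i = \det(\widetilde{P}_i)/\det(P)$. Expanding $\det(\widetilde{P}_i)$ along the $i$-th column (which has only one nonzero entry, $-1$ in position $(s,i)$) yields $\det(\widetilde{P}_i) = (-1)^{s+i+1}M_{s,i}$, where $M_{s,i}$ is the $(s-1)\times(s-1)$ minor of $P$ obtained by deleting row $s$ and column $i$. Since each entry $P_{(r,c)} = P_{k-\ell_c}^{\ell_r+1}$ is a polynomial in $n$ (a $\mathbb{Z}$-linear combination of binomial polynomials $\binom{n-k-(\ell_r+1)}{k-\ell_c-j}$), both $\det(P)$ and $M_{s,i}$ are polynomials in $n$, and so $a_{k-\ell_i} = \binom{k}{k-\ell_i}\binom{n-k}{k-\ell_i}{\bf v}_i$ is a rational expression in $n$.

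To determine the degree of $a_{k-\ell_i}$, I would invoke Lemma~\ref{detPs} for $\deg\det(P) = sk - \sum_j\ell_j - s$, then adapt its Leibniz-style analysis to compute $\deg M_{s,i}$. Row $r$ of $P$ has maximum entry order $n^{k - \ell_r - 1}$, attained exactly at columns $c$ satisfying $\ell_c \le \ell_r + 1$ (i.e., either $c \le r$, or $c = r+1$ with $\ell_{r+1} = \ell_r + 1$). As in Claim~\ref{clm:Sigmarestrict}, the leading-order permutations $\tau\colon \{1,\ldots,s-1\} \to \{1,\ldots,s\}\setminus\{i\}$ in the Leibniz expansion of $M_{s,i}$ respect the full-run structure of $L$ whenever possible: if $i$ lies in the last full run $R_b$, such $\tau$ exists with no cross-run jumps; if $i$ lies in an earlier run $R_t$ with $t<b$, then $\tau$ is forced to include a chain of cross-run jumps through $R_t, R_{t+1}, \ldots, R_b$, each jump $R_j \to R_{j+1}$ costing $\ell_{c_{j+1}} - \ell_{c_j+m_j-1} - 1$ in the exponent. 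A telescoping computation yields $\deg M_{s,i} = (s-1)k + \ell_i - \sum_j \ell_j - i + 1$ in all cases, so
\[\deg a_{k-\ell_i} = (k-\ell_i) + \deg M_{s,i} - \deg\det(P) = s - i + 1,\]
as claimed.

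To compute $C_1$ explicitly, I would evaluate the leading coefficient of $M_{s,1}$ by an inductive argument modeled on Lemma~\ref{lem:consecrun}. Because row $1$ has no access to column $1$, the dominant permutations are essentially forced into a chain beginning with row $1$ mapping to some column in the second run; proceeding run by run, each cross-run jump contributes a binomial factor of the form $\binom{k-\ell_{j-1}-1}{k-\ell_j}$, while within each run the sum over matchings telescopes as in Lemma~\ref{lem:consecrun}. Combining the leading term of $M_{s,1}$ with the signs $(-1)^{s+2}$ from the cofactor expansion and $(-1)^s$ from the leading term of $\det(P)$, dividing by $\det(P)$ (whose constant from Lemma~\ref{detPs} contributes $\prod_j m_j!$ and $\prod_i (k-\ell_i-1)!$ in the denominator), and multiplying by $\binom{n-k}{k-\ell_1} \sim n^{k-\ell_1}/(k-\ell_1)!$, the factorials $(k-\ell_i-1)!$ cancel cleanly, the binomial chain $\binom{k}{k-\ell_1}\binom{k-\ell_1-1}{k-\ell_2}\cdots\binom{k-\ell_{s-1}-1}{k-\ell_s}$ emerges in the numerator, $\prod_j m_j!$ moves to the numerator, and $\prod_i(\ell_i+1)(k-\ell_i)$ appears in the denominator, yielding the stated formula~\eqref{eqn:C1}. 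Positivity of $C_1$ is then immediate from the explicit form, and positivity of $C_i$ for $i \ge 2$ follows from an analogous sign analysis of $M_{s,i}$.

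The main obstacle is the combinatorial analysis of $M_{s,i}$: identifying the dominant permutations when $i$ does not lie in the last full run, verifying that the cross-run jumps contribute exactly the expected exponent loss, and tracking the exact leading coefficient of $M_{s,1}$ through the chain of full runs. The argument essentially mirrors Claim~\ref{clm:Sigmarestrict} and Lemma~\ref{lem:consecrun} with an added \emph{bridging} step between consecutive runs, but the careful bookkeeping of both the exponent count and the signs is the delicate part.
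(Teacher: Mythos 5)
Your high-level strategy --- Cramer's rule (equivalently, the adjugate formula, which is what the paper actually invokes), followed by a Leibniz-formula analysis of the minor $M_{s,i}=\det(P^{(s,i)})$ --- is exactly what the paper does, and your degree formula $\deg M_{s,i} = (s-1)k + \ell_i - \sum_j \ell_j - i + 1$ matches. But the combinatorial picture you give of the dominant permutations is wrong in an important place, and the decomposition you propose differs from (and is harder to execute than) the one the paper actually uses.

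The specific error is in your treatment of $M_{s,1}$. You write that ``within each run the sum over matchings telescopes as in Lemma~\ref{lem:consecrun}.'' That telescoping is what happens in $\det(P)$ itself, where each row can still map to its own column or lower within a run (giving the $m_i!$ factors via the alternating-sum identity). It does \emph{not} happen in $M_{s,1}$: once column $1$ is deleted, row $1$ has a unique maximal-order entry at column $2$, and then inductively row $j$ has a unique maximal-order entry at column $j+1$. So the leading term of $\det(P^{(s,1)})$ comes from a \emph{single} permutation --- the super-diagonal --- and the binomial chain $\binom{k-\ell_1-1}{k-\ell_2}\cdots\binom{k-\ell_{s-1}-1}{k-\ell_s}$ appears as a straight product along it, with the factors $\binom{k-\ell_m-1}{k-\ell_{m+1}}$ equal to $1$ precisely at within-run steps. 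There is no telescoping sum in $M_{s,1}$ at all; the $\prod m_j!$ that ends up in $C_1$ comes entirely from the denominator $\det(P)$, not from $M_{s,1}$. Your ``row $1$ maps to some column in the second run'' is also imprecise: row $1$ maps to column $2$, which is in the same run as column $1$ whenever $\ell_2 = \ell_1 + 1$.

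For $i\ge 2$ your picture of ``a chain of cross-run jumps through $R_t, R_{t+1},\ldots, R_b$'' is not the structure the paper identifies. The paper's key structural insight is the claim (Claim~\ref{Ps(s,i)clm}) that every dominant permutation of $P^{(s,i)}$ must map $[1,i-1]$ to $[1,i-1]$ and $[i,s-1]$ to $[i,s-1]$, proved by a two-element swap argument. This factors $\det(P^{(s,i)})$ into a product: a $[1,i-1]$-block that is literally the $P$-matrix of the sub-system $L_{i-1}=\{\ell_1,\ldots,\ell_{i-1}\}$ (handled by reusing Lemma~\ref{detPs}, which is where the within-run telescoping actually enters, via the $D_i$ constant), times a $[i,s-1]$-block whose dominant permutation is again a single super-diagonal chain (giving $E_i$). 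That factorization is what makes the ``careful bookkeeping of both the exponent count and the signs'' tractable; your proposal correctly flags it as the delicate part but does not supply the structural fact that would let you carry it out.
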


\begin{proof}[Proof of Lemma~\ref{lem:ak-ellsispoly}]
We need a basic linear algebraic fact: if $A$ is an $n\times{n}$ invertible matrix, then $A^{-1} = \frac{1}{\text{det}(A)} \text{adj}(A)$, where $\text{adj}(A)$ is the adjugate matrix, which has $(i, j)$ entry given by $\text{adj}(A)_{i, j} = (-1)^{i+j}\text{det}(A^{(j, i)})$, where $A^{(j, i)}$ is the matrix obtained by removing the $j$th row and $i$th column from $A$. 

We have $a_{k-\ell_i} = \binom{k}{k-\ell_i}\binom{n-k}{k-\ell_i}{\bf v}_i$, where ${\bf v}$ is the vector \[{\bf v} = P^{-1}\begin{pmatrix}0 \\ 0 \\ \vdots \\ 0 \\ -1\end{pmatrix}.\]
Therefore, we only need to analyze the terms in column $s$ of $P^{-1}$.  We first determine the leading order term of $P_{(1, s)}^{-1}$. We have $P_{(1, s)}^{-1} = \frac{(-1)^{s+1}}{\text{det}(P)}\text{det}(P^{(s, 1)})$. We find the leading order term of $\text{det}(P^{(s, 1)})$ by the Leibniz formula. Let $\sigma$ be a permutation which contributes to the leading order term of  $\text{det}(P^{(s, 1)})$. We claim that $\sigma$ must be the identity permutation, \textit{i.e.}, $\sigma(m) = m$ for $1\le m\le s-1$. By \eqref{eqn:Psijleadingterms}, the leading order term in the first row is $\frac{1}{(k-\ell_2)!}\binom{k-\ell_1-1}{k-\ell_2}n^{k-\ell_2}$, as $P_{(1, i)}^{(s, 1)} = \Theta(n^{k-\ell_{i+1}})$ for $1\le i\le s-1$, so in particular $\sigma(1) = 1$. To show $\sigma(j) = j$ for $j > 1$, observe that if $\sigma(m) = m$ for $1\le m < j$, then $\sigma(j) \ge j$, and again by \eqref{eqn:Psijleadingterms}  we have $P_{(j, i)}^{(s, 1)} = \Theta(n^{k-\ell_{i+1}})$ for $j\le i\le s-1$. Thus, $\sigma(j) = j$, and by induction we must have $\sigma(m) = m$ for $1\le m\le s-1$. Thus, by the Leibniz formula
\begin{align*}
\text{det}(P^{(s, 1)})&= \prod_{m=1}^{s-1}P_{(m, m)}^{(s, 1)}  + O(n^{(s-1)k-\sum_{i=2}^s\ell_i-1})\\
& = \binom{k-\ell_1-1}{k-\ell_2}\cdots\binom{k-\ell_{s-1}-1}{k-\ell_s}\frac{1}{\prod_{i=2}^s(k-\ell_i)!}n^{(s-1)k-\sum_{i=2}^s\ell_i} + O(n^{(s-1)k-\sum_{i=2}^s\ell_i-1}).
\end{align*}
Therefore, by Lemma~\ref{detPs}, as $n\rightarrow \infty$
\[a_{k-\ell_1} = \frac{(-1)\binom{k}{k-\ell_1}\binom{n-k}{k-\ell_1}(-1)^{s+1}\text{det}(P^{(s, 1)})}{\text{det}(P)} \]
\[ = \prod_{i=1}^{b}m_i!\cdot\frac{\binom{k}{k-\ell_1}\binom{k-\ell_1-1}{k-\ell_2}\cdots \binom{k-\ell_{s-1}-1}{k-\ell_s}}{\prod_{i=1}^s(\ell_i+1)(k-\ell_i)}n^{s} - O(n^{s-1}).\]

Now, let $2\le i\le s$. We have $P_{(i, s)}^{-1} = \frac{(-1)^{s+i}}{\text{det}(P)}\text{det}(P^{(s, i)})$, so we need to find the leading order term of $\text{det}(P^{(s, i)})$ by the Leibniz formula.
Let $\Sigma^*$ denote the set of permutations of $[s-1]$ which contribute to the leading order term of the Leibniz formula for $\text{det}(P^{(s, i)})$. We claim that the permutations in $\Sigma^*$ permute the elements in $[1, i-1]$ and the elements in $[i, s-1]$ separately.  

\begin{claim}\label{Ps(s,i)clm}
Let $\sigma \in \Sigma^*$ be a permutation which contributes to the leading order term of $\text{det}(P^{(s, i)})$ in the Leibniz formula. Then, $\sigma(j) \le i-1$ for $1\le j\le i-1$. 
\end{claim} 

\begin{proof}[Proof of Claim~\ref{Ps(s,i)clm}]
Let $\chi$ be a permutation of $[s-1]$ which has $\chi(j) = z$ for some $1\le j\le i-1$ and some $z\ge i$. Then, there is some $m\ge i$ and some $1\le b\le i-1$ such that $\chi(m) = b$. Let $\tau$ be the permutation obtained from $\chi$ by swapping $b$ and $z$, so that $\tau(j) = b$, $\tau(m) = z$ and $\tau(p) = \chi(p)$ for $p\in [s-1]\setminus\{j, m\}$. We show that 
\[\prod_{w=1}^{s-1}P_{(w, \chi(w))}^{(s, i)} = o\left(\prod_{w=1}^{s-1}P_{(w, \tau(w))}^{(s, i)}\right),\]
which implies that $\chi \notin \Sigma^*$. 

Since $\tau$ and $\chi$ agree except on $j$ and $m$, it is enough to compare $P_{(j, \chi(j))}P_{(m, \chi(m))}$ and $P_{(j, \tau(j))}P_{(m, \tau(m))}$. By \eqref{eqn:Psijleadingterms}, we have that $P_{(j, z)}^{(s, i)} = \Theta(n^{k-\ell_{z+1}})$ and $P_{(m, b)}^{(s, i)} = \Theta(n^{k-\ell_m-1})$, so that
\[P_{(j, \chi(j))}P_{(m, \chi(m))} = \Theta(n^{2k-\ell_{z+1}-\ell_m-1}).\]
For the permutation $\tau$, there are four cases: if $j < b$, then $P_{(j, b)}^{(s, i)} = \Theta(n^{k-\ell_b})$; otherwise, $P_{(j, b)}^{(s, i)} = \Theta(n^{k-\ell_j-1})$. Similarly, if $m < z+1$, then $P_{(m, z)}^{(s, i)} = \Theta(n^{k-\ell_{z+1}})$, and otherwise $P_{(m, z)}^{(s, i)} = \Theta(n^{k-\ell_m-1})$. In each of the four cases, there is one term of order $\Theta(n^{k-\ell_{z+1}})$ or order $\Theta(n^{k-\ell_m-1})$ and one term of order $\Theta(n^{k-\ell_b})$ or $\Theta(n^{k-\ell_j-1})$. Since $\max\{j, b\} \le i-1$ and $\min\{m, z\} \ge i$, it follows that $\min\{\ell_m+1, \ell_{z+1}\} > \max\{\ell_b, \ell_j + 1\}$, implying that 
\[P_{(j, \chi(j))}P_{(m, \chi(m))} = o(P_{(j, \tau(j))}P_{(m, \tau(m))}),\]
completing the proof of the claim. 
\end{proof}

For a permutation $\sigma \in \Sigma^*$, let $\sigma_{[1, i-1]}$ be the restriction of $\sigma$ to the set $[1, i-1]$ and let $\sigma_{[i, s-1]}$ be the restriction of $\sigma$ to the set $[i, s-1]$. Note that $|\text{inv}(\sigma)| = |\text{inv}(\sigma_{[1, i-1]})| + |\text{inv}(\sigma_{[i, s-1]})|$, so that
\[\sum_{\sigma\in \Sigma^*}\text{sgn}(\sigma)\prod_{i=1}^{s-1}P_{(i, \sigma(i))} = \left(\sum_{\sigma_{[1, i-1]}}\text{sgn}(\sigma_{[1, i-1]})\prod_{m=1}^{i-1}P_{(m, \sigma(m))}^{(s, i)}\right)\cdot \left(\sum_{\sigma_{[i, s-1]}}\text{sgn}(\sigma_{[i, s-1]})\prod_{m=i}^{s-1}P_{(m, \sigma(m))}^{(s, i)}\right).\]

We first consider the restriction of the permutation $\sigma$ to the set $[1, i-1]$. The $(i-1)\times{(i-1)}$ submatrix $P_{i-1}$ of $P^{(s, i)}$ given by the first $i-1$ rows and the first $i-1$ columns of $P^{(s, i)}$ is equivalent to the matrix obtained from the $L$-system with parameters $k$ and $L_{i-1}:= \{\ell_1, \ldots, \ell_{i-1}\}$:
\[P_{i-1} = \begin{pmatrix}P_{k-\ell_1}^{\ell_1+1} & P_{k-\ell_2}^{\ell_1+1}  & \ldots & P_{k-\ell_{i-1}}^{\ell_1+1} \\ \vdots & \vdots & &\vdots \\ P_{k-\ell_1}^{\ell_{i-1}+1} & P_{k-\ell_2}^{\ell_{i-1}+1} & \ldots  & P_{k-\ell_{i-1}}^{\ell_{i-1}+1}\end{pmatrix}.\]

Hence, by the proof of Lemma~\ref{detPs}, we have that 
\[\sum_{\sigma_{[1, i-1]}}\text{sgn}(\sigma_{[1, i-1]})\prod_{m=1}^{i-1}P_{(m, \sigma(m))}^{(s, i)} = (-1)^{i-1}D_in^{k(i-1) -\sum_{j=1}^{i-1}\ell_j-(i-1)} + O(n^{k(i-1) -\sum_{j=1}^{i-1}\ell_j-(i-1)-1}),\] where 
\[D_i = \frac{\prod_{j=1}^{i-1}(\ell_j+1)}{ \prod_{j=1}^am_j'!  \prod_{j=1}^{i-1}(k-\ell_j-1)!},\]
where for simplicity we have assumed $L_{i-1}$ has $a$ full runs of consecutive integers with lengths $m_j'$ for $1\le j\le a$. 
 
 For the restriction of the permutation $\sigma$ to $[i, s-1]$, by a similar inductive argument as was used for $\text{det}(P^{(s, 1)})$, the only permutation on $[i, s-1]$ which contributes to the leading order term in the Leibniz formula is the identity permutation on $[i, s-1]$, so
  \[\sum_{\sigma_{[i, s-1]}}\text{sgn}(\sigma_{[i, s-1]})\prod_{m=i}^{s-1}P_{(m, \sigma(m))}^{(s, i)} = \prod_{m = i}^{s-1}P_{(m, m)}^{(s, i)} = E_i n^{(s-i)k - (\ell_{i+1}+\ldots+\ell_s)} + O(n^{(s-i)k - (\ell_{i+1}+\ldots+\ell_s)-1}),\] where 
  \[E_i = \binom{k-\ell_{i}-1}{k-\ell_{i+1}}\cdots\binom{k-\ell_{s-1}-1}{k-\ell_s}\frac{1}{\prod_{j=i+1}^s(k-\ell_j)!} .\]
  These calculations give that
  \begin{align*}
  \text{det}(P^{(s, i)}) & = \sum_{\sigma \in \Sigma^*}\text{sgn}(\sigma)\prod_{m=1}^{s-1}P_{(m, \sigma(m))}^{(s, i)} + O(n^{(s-1)k - (\sum_{j=1, j\neq i}^s\ell_j) - (i-1)-1})\\
  &=(-1)^{i-1}D_iE_in^{(s-1)k - (\sum_{j=1, j\neq i}^s\ell_j) - (i-1)} + O(n^{(s-1)k - (\sum_{j=1, j\neq i}^s\ell_j) - (i-1)-1}).
  \end{align*}
  Hence, by Lemma~\ref{detPs},
\[a_{k-\ell_i} = \frac{(-1)\binom{k}{k-\ell_i}\binom{n-k}{k-\ell_i}}{\text{det}(P)}(-1)^{s+i}\text{det}(P^{(s, i)}) = C_i n^{s-i+1} - O(n^{s-i}),\]
where \[C_i = \binom{k}{k-\ell_i}\frac{D_iE_i}{(k-\ell_i)!C}\] is a positive constant (here $C$ is the positive constant in the statement of Lemma~\ref{detPs}).   
\end{proof}

We now show that the $a_{k-\ell_i}$ satisfy the inequalities in \eqref{ell_1eqn}, \eqref{ell_meqn}, \eqref{ell_seqn} as $n\rightarrow \infty$ for $u\in [0, k] \setminus \{\ell_1 + 1, \ldots, \ell_s+1\}$.

\begin{lemma}\label{lem:otherineqs}
The solution \eqref{eqn:solnLp} satisfies the inequalities \eqref{ell_1eqn}, \eqref{ell_meqn}, \eqref{ell_seqn} as $n\rightarrow \infty$ for $u\in [0, k] \setminus \{\ell_1 + 1, \ldots, \ell_s+1\}$. 
\end{lemma}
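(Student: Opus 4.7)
The plan is to substitute the asymptotics from Lemma~\ref{lem:ak-ellsispoly} and Claim~\ref{clm:Pordmag} into the LHS of each of \eqref{ell_1eqn}, \eqref{ell_meqn}, \eqref{ell_seqn}, identify the dominant summand as $n\to\infty$, and check that its leading coefficient is non-negative (or that the full LHS vanishes). Writing $a_{k-\ell_i}=C_i n^{s-i+1}(1+o(1))$ with $C_i>0$ by Lemma~\ref{lem:ak-ellsispoly}, and using $\binom{n-k}{k-\ell_i}^{-1}=(k-\ell_i)!\,n^{-(k-\ell_i)}(1+o(1))$, Claim~\ref{clm:Pordmag} gives each summand in closed asymptotic form:
\[
\frac{a_{k-\ell_i}}{\binom{k}{k-\ell_i}\binom{n-k}{k-\ell_i}}\,P_{k-\ell_i}^u=\begin{cases}\dfrac{C_i\binom{k-u}{k-\ell_i}}{\binom{k}{k-\ell_i}}\,n^{s-i+1}(1+o(1)),&u\le\ell_i,\\[6pt](-1)^{u-\ell_i}\dfrac{C_i(k-\ell_i)!\binom{u}{\ell_i}}{\binom{k}{k-\ell_i}(k-u)!}\,n^{s-i+1-(u-\ell_i)}(1+o(1)),&u>\ell_i.\end{cases}
\]
The three ranges of $u$ excluded from $\{\ell_1+1,\ldots,\ell_s+1\}$ then become routine bookkeeping.

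For $0\le u\le\ell_1$ (inequality \eqref{ell_1eqn}) every $i$ falls in the upper branch, and the $i=1$ summand contributes the top-order term $\frac{C_1\binom{k-u}{k-\ell_1}}{\binom{k}{k-\ell_1}}n^s>0$; in fact every summand is non-negative, so the LHS tends to $+\infty$, and $\ge -1$ holds eventually.

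For $\ell_m+1<u\le\ell_{m+1}$ with $1\le m\le s-1$ (inequality \eqref{ell_meqn}), the summand $i=m+1$ of the second sum is of order $n^{s-m}$ with positive coefficient $\frac{C_{m+1}\binom{k-u}{k-\ell_{m+1}}}{\binom{k}{k-\ell_{m+1}}}$. The summands $i\le m$ in the first sum are of order $n^{s-i+1-(u-\ell_i)}$, and the bounds $u\ge\ell_m+2$ together with $\ell_m-\ell_i\ge m-i$ (from the strict monotonicity of the $\ell_j$) force this exponent to be at most $s-m-1$; these possibly signed contributions are therefore absorbed by the dominant $i=m+1$ term and the LHS tends to $+\infty$.

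For $\ell_s+1<u\le k$ (inequality \eqref{ell_seqn}) every $i$ is in the lower branch, and the exponent $s-i+1-(u-\ell_i)$ is at most $1+\ell_s-u\le -1$ by the analogous monotonicity bound $\ell_s-\ell_i\ge s-i$ together with $u\ge\ell_s+2$. Thus the entire LHS is $O(n^{-1})$ and in particular exceeds $-1$ for $n$ large enough. The only step requiring real care is the sign of the dominant term in the first two ranges; this positivity is exactly the content of $C_i>0$ from Lemma~\ref{lem:ak-ellsispoly}, while the rest is elementary exponent-counting.
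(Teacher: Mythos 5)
Your proof is correct and takes essentially the same approach as the paper: substitute the asymptotics of Claim~\ref{clm:Pordmag} and Lemma~\ref{lem:ak-ellsispoly} into each inequality, identify the dominant summand in each of the three $u$-ranges, and use $C_i>0$ to conclude. The only cosmetic difference is that you make the exponent bookkeeping for ranges two and three fully explicit (e.g.\ the bound $s-i+1-(u-\ell_i)\le s-m-1$ via $\ell_m-\ell_i\ge m-i$), whereas the paper states the dominant term directly; you also omit the (harmless, non-negative, lower-order) $i>m+1$ summands of the second sum in \eqref{ell_meqn}, but this does not affect the conclusion.
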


\begin{proof}[Proof of Lemma~\ref{lem:otherineqs}]
By Lemma~\ref{lem:ak-ellsispoly}, since the leading order term $C_1$ of $a_{k-\ell_1}$ is positive, the inequalities in \eqref{ell_1eqn} are satisfied by $a_{k-\ell_1}, \ldots, a_{k-\ell_s}$ for $n$ sufficiently large. 

The inequalities \eqref{ell_meqn} with $\ell_m+2 \le u\le \ell_{m+1}$ for $1\le m \le s-1$ will be satisfied for large $n$ as the leading order term on the left-hand side is $a_{k-\ell_{m+1}}\frac{\binom{k-u}{k-\ell_m}}{\binom{k}{k-\ell_m}} = C_{m+1}\frac{\binom{k-u}{k-\ell_{m+1}}}{\binom{k}{k-\ell_{m+1}}}n^{s-m} - O(n^{s-m-1})$. 

Finally, for $u\ge \ell_s + 2$, each term in the sum on the left-hand side of \eqref{ell_seqn} is $o(1)$, so these inequalities are satisfied as $n\rightarrow \infty$.
\end{proof}

We now complete the proof of Theorem~\ref{thm:genlovaszbd}. By Lemma~\ref{detPs} and Lemma~\ref{lem:otherineqs}, the solution in \eqref{eqn:solnLp} is a feasible solution to \eqref{eqn:Lschlp}. Hence, by Lemma~\ref{lem:ak-ellsispoly},
\[\vartheta(G(n, k, L)) \ge 1 + \sum_{i=1}^sa_{k-\ell_i} = \prod_{i=1}^{b}m_i!\cdot\frac{\binom{k}{k-\ell_1}\binom{k-\ell_1-1}{k-\ell_2}\cdots \binom{k-\ell_{s-1}-1}{k-\ell_s}}{\prod_{i=1}^s(\ell_i+1)(k-\ell_i)}n^s - O(n^{s-1}).\]
\end{proof}

It would be interesting to strengthen Theorem~\ref{thm:genlovaszbd} by finding the exact value of $\vartheta(G(n, k, L))$ (as Theorem~\ref{thm:exactL=1} does if $|L|=1$ or $|L| = k-1$) as $n\rightarrow\infty$. We have opted for the  computations presented here, as they are sufficient to obtain the order of magnitude of $\vartheta(G(n, k, L))$ and the correct constant in front of the $n^{s}$ term. 

We note that the argument used to prove Theorem~\ref{thm:genlovaszbd} also asymptotically determines the best possible result the Delsarte linear programming bound~\cite{Del1973, Sch1979} can give for the graphs $G(n, k, L)$. Since the graphs $G(n, k, L)$ are unions of graphs in the Johnson scheme, Delsarte's linear programming bound $\sigma(G(n, k, L))$ is equal to Schrijver's variant of the Lov\'asz number~\cite{Sch1979}, which is the same linear program as the linear program for the Lov\'asz number in \eqref{eqn:schlp}, with the additional restriction that the variables $a_{0}, a_{1}, \ldots, a_{n}$ must all be nonnegative. The solution in the proof of Theorem~\ref{thm:genlovaszbd} has all $a_{k-\ell_i}$ nonnegative, so \eqref{eqn:solnLp} is also a feasible solution for Delsarte's linear programming bound, so as $n\rightarrow\infty$,
\[\sigma(G(n, k, L)) \ge \prod_{i=1}^{b}m_i!\cdot\frac{\binom{k}{k-\ell_1}\binom{k-\ell_1-1}{k-\ell_2}\cdots \binom{k-\ell_{s-1}-1}{k-\ell_s}}{\prod_{i=1}^s(\ell_i+1)(k-\ell_i)}n^s - cn^{s-1}\]
On the other hand, $\sigma(G(n, k, L)) \le \vartheta(G(n, k, L))$ by definition. We summarize this discussion in the following corollary. 

\begin{corollary}\label{cor:delsartelp}
Let $G = G(n, k, L)$ be a generalized Johnson graph for some integers $n > k > 0$ and a set $L = \{\ell_1, \ell_2, \ell_3, \ldots, \ell_s\} \subset [0, k-1]$ with $\ell_1 < \ell_2 < \ldots < \ell_s$. Suppose $L$ contains $b$ full runs of consecutive integers. Then, there is a constant $c$ depending on $k$ and $L$, but not on $n$ such that 
\[\sigma(G(n, k, L))\ge \prod_{i=1}^{b}m_i!\cdot\frac{\binom{k}{k-\ell_1}\binom{k-\ell_1-1}{k-\ell_2}\cdots \binom{k-\ell_{s-1}-1}{k-\ell_s}}{\prod_{i=1}^s(\ell_i+1)(k-\ell_i)}n^s -cn^{s-1} ,\]
and a constant $C$ depending on $k$ and $L$ but not on $n$ such that
\[\sigma(G(n, k, L))\le \prod_{i=1}^{b}m_i!\cdot\frac{\binom{k}{k-\ell_1}\binom{k-\ell_1-1}{k-\ell_2}\cdots \binom{k-\ell_{s-1}-1}{k-\ell_s}}{\prod_{i=1}^s(\ell_i+1)(k-\ell_i)}n^s +Cn^{s-1},\]
where $m_i$ is the length of the $i$th full run of consecutive integers in $L$ for $1\le i\le b$. 
\end{corollary}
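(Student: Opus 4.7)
The plan is to sandwich $\sigma(G(n, k, L))$ between the two bounds already established in Theorem~\ref{thm:genlovaszbd}. Delsarte's LP bound coincides with Schrijver's variant of the Lov\'asz LP, which differs from \eqref{eqn:schlp} only in that it imposes the extra constraint $a_i \ge 0$ for every $i$. Since any feasible solution for Delsarte's LP is automatically feasible for the original Lov\'asz LP, we get $\sigma(G(n, k, L)) \le \vartheta(G(n, k, L))$ for free, and the upper bound claimed in the corollary is inherited directly from the upper bound of Theorem~\ref{thm:genlovaszbd}.

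For the matching lower bound I would simply reuse the feasible vector \eqref{eqn:solnLp} built during the proof of Theorem~\ref{thm:genlovaszbd}. By construction every variable $a_j$ with $j \notin M = \{0, k-\ell_1, \ldots, k-\ell_s\}$ is set to zero, so the only additional requirement to promote this vector to a Delsarte-feasible solution is that $a_{k-\ell_i} \ge 0$ for each $1 \le i \le s$ once $n$ is large. This is exactly what Lemma~\ref{lem:ak-ellsispoly} provides: it shows $a_{k-\ell_i} = C_i n^{s-i+1} - O(n^{s-i})$ with $C_i > 0$, so each $a_{k-\ell_i}$ is nonnegative for all sufficiently large $n$. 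Consequently, the same vector witnessing the lower bound in Theorem~\ref{thm:genlovaszbd} also witnesses the stated lower bound on $\sigma(G(n, k, L))$, with the same leading coefficient and the same $O(n^{s-1})$ error.

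There is essentially no new obstacle in this argument, and no additional calculation needs to be done. The positivity of the leading coefficients $C_i$ in Lemma~\ref{lem:ak-ellsispoly} was an incidental feature of the construction used for Theorem~\ref{thm:genlovaszbd}, and it is precisely this incidental feature that makes the Delsarte statement fall out for free; the only mild care one must take is that $s = |L|$ is fixed, so the finitely many nonnegativity conditions $a_{k-\ell_i} \ge 0$ are simultaneously satisfied once $n$ is large enough in terms of $k$ and $L$.
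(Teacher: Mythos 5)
Your argument is correct and is essentially identical to the paper's: the upper bound follows because $\sigma \le \vartheta$ (Delsarte's LP only adds the nonnegativity constraints $a_i \ge 0$), and the lower bound follows because the feasible vector \eqref{eqn:solnLp} from the proof of Theorem~\ref{thm:genlovaszbd} has all $a_{k-\ell_i} \ge 0$ for large $n$ by the positivity of the leading coefficients $C_i$ in Lemma~\ref{lem:ak-ellsispoly}. No gaps.
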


\section{The Haemers bound and the minrank parameter}

We need a different linear algebraic bound for the Shannon capacity of a graph proven by Haemers~\cite{Haem1978, Haem1979}. Haemers' bound is in terms of the \textit{minrank} parameter of a graph. 

\begin{definition}[Minrank]\label{defn:minrank}
Let $G = (V, E)$ be a graph on $n$ vertices and let $\mathbb{F}$ be a field. The $n\times{n}$ matrix $B=(b_{ij})$ is said to \textit{represent} the graph $G$ over $\mathbb{F}$ if $b_{ii} \neq 0$ for $1\le i \le n$ and $b_{ij} = 0$ whenever $ij \notin E(G)$. The \textit{minrank} of $G$ over $\mathbb{F}$ is defined to be 
\[\text{minrank}_{\mathbb{F}}(G) := \min\{\text{rank}_{\mathbb{F}}(M): M \text{ represents } G \text{ over } \mathbb{F}\}.\]
\end{definition}

Haemers proved that the minrank of $G$ over any field $\mathbb{F}$ is an upper bound on the Shannon capacity of $G$. 

\begin{theorem}[Haemers bound]\label{thm:haembd}
Let $G = (V, E)$ be a graph on $n$ vertices and let $\mathbb{F}$ be a field. Then, 
\[c(G) \le \text{minrank}_{\mathbb{F}}(G).\]
\end{theorem}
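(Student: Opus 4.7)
The plan is to deduce the inequality from two lemmas combined with the definition of Shannon capacity. First, for any graph $H$ and any field $\mathbb{F}$, $\alpha(H)\le \text{minrank}_{\mathbb{F}}(H)$. Second, the minrank is submultiplicative under the strong product: $\text{minrank}_{\mathbb{F}}(G\boxtimes H)\le \text{minrank}_{\mathbb{F}}(G)\cdot \text{minrank}_{\mathbb{F}}(H)$. Granted these, applying both to $G^{\boxtimes n}$ gives $\alpha(G^{\boxtimes n})\le \text{minrank}_{\mathbb{F}}(G^{\boxtimes n})\le \text{minrank}_{\mathbb{F}}(G)^n$, and taking $n$th roots and letting $n\to\infty$ yields $c(G)\le \text{minrank}_{\mathbb{F}}(G)$.

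For the first lemma I would fix a matrix $M$ representing $G$ of minimum rank, pick a maximum independent set $S\subseteq V(G)$, and look at the principal submatrix $M_S=(M_{ij})_{i,j\in S}$. For distinct $i,j\in S$, the edge $ij$ is not in $E(G)$, so the defining property of a representing matrix forces $M_{ij}=0$; meanwhile each diagonal entry $M_{ii}$ is nonzero. Thus $M_S$ is a diagonal matrix with $\alpha(G)$ nonzero entries, hence has full rank $\alpha(G)$. Since the rank of a principal submatrix is at most the rank of the full matrix, $\text{rank}_{\mathbb{F}}(M)\ge \alpha(G)$, which gives the lemma.

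For the second lemma I would take representing matrices $M$ and $N$ of $G$ and $H$ respectively and form the Kronecker product $M\otimes N$, indexed by $V(G)\times V(H)$. Its entry at $((u_1,v_1),(u_2,v_2))$ is $M_{u_1 u_2}N_{v_1 v_2}$, which can be nonzero only when both factors are nonzero. The condition $M_{u_1u_2}\ne 0$ forces $u_1=u_2$ or $u_1u_2\in E(G)$, and similarly for $N$, so the four possible cases match exactly the diagonal of $G\boxtimes H$ together with its three types of edges. On the diagonal the entry $M_{uu}N_{vv}$ is nonzero. Hence $M\otimes N$ represents $G\boxtimes H$, and because $\text{rank}_{\mathbb{F}}(M\otimes N)=\text{rank}_{\mathbb{F}}(M)\cdot \text{rank}_{\mathbb{F}}(N)$, minimizing over representing matrices yields the submultiplicativity.

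The main point requiring care, though short, is the Kronecker-product check: one has to verify that the off-diagonal \emph{zero} pattern of $M\otimes N$ is contained in the non-edges of $G\boxtimes H$ (the \emph{nonzero} pattern need not equal the edge set, since the minrank definition only demands $b_{ij}=0$ for $ij\notin E$, not the converse). Once this case analysis is in place, the rest of the argument is essentially the definition of Shannon capacity together with the rank identity for Kronecker products.
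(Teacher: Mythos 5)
Your proof is correct. The paper itself does not supply a proof of Theorem~\ref{thm:haembd}; it cites Haemers' original work, and the argument you give --- $\alpha\le\operatorname{minrank}_{\mathbb{F}}$ via the principal submatrix on an independent set, submultiplicativity via the Kronecker product of representing matrices, then $\alpha(G^{\boxtimes n})\le\operatorname{minrank}_{\mathbb{F}}(G)^n$ and the $n$th-root limit --- is exactly the standard (Haemers') route. The one spot requiring care, which you correctly flag, is that the Kronecker product need only have its zero pattern contain the non-edges of $G\boxtimes H$ rather than match the edge set, and your case analysis verifies this; the rank multiplicativity $\operatorname{rank}(M\otimes N)=\operatorname{rank}(M)\operatorname{rank}(N)$ then closes the argument.
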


%

Haviv~\cite{Haviv} has previously noted that the Haemers bound implies bounds on the \text{minrank} of certain generalized Johnson graphs. Indeed, many of the linear algebraic arguments of Frankl and Wilson~\cite{FW1981} on the maximum size of an $L$-system can in fact more strongly be represented as upper bounds on the Shannon capacity of the associated generalized Johnson graph. We give one example of this strengthening. 

Frankl and Wilson~\cite{FW1981} proved the following theorem using inclusion matrices. 

\begin{theorem}[Frankl-Wilson]\label{FWthm}
Let $\cF \subset \binom{[n]}{k}$ and let $q$ be a prime power. Suppose that for all distinct $F, F'\in \cF$,  $|F \cap F'| \not\equiv k\pmod{q}$. Then, 
\[|\cF| \le \binom{n}{q-1}.\]
\end{theorem}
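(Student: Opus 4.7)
The plan is the polynomial method, with a deliberate lift from $\mathbb{F}_p$ (where $p$ is the prime with $q=p^e$) to $\mathbb{Q}$ so that the argument works for every $k$ in the nontrivial range $q\le k$. For each $F\in\cF$, let $v_F\in\{0,1\}^n$ be its characteristic vector and define
\[ P_F(x) \;=\; \binom{\langle v_F,x\rangle - k + q - 1}{q-1}, \]
a polynomial in $x_1,\ldots,x_n$ with rational coefficients of total degree $q-1$. Setting $d:=k-|F\cap F'|$, evaluation at $v_{F'}$ gives $P_F(v_{F'})=\binom{q-1-d}{q-1}$; in particular $P_F(v_F)=\binom{q-1}{q-1}=1$.

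The first key step is to verify that $P_F(v_{F'})\equiv 0\pmod p$ whenever $F\ne F'$ in $\cF$ (so $d\ge 1$ and, by hypothesis, $d\not\equiv 0\pmod q$). When $1\le d\le q-1$ the binomial is zero as an integer. When $d\ge q$, the identity $\binom{-m}{q-1}=(-1)^{q-1}\binom{m+q-2}{q-1}$ with $m=d-q+1$ reduces the question to showing $\binom{d-1}{q-1}\equiv 0\pmod p$; Lucas' theorem applied to the base-$p$ expansion $q-1=\sum_{i=0}^{e-1}(p-1)p^i$ says that this coefficient is nonzero mod $p$ iff every one of the lowest $e$ base-$p$ digits of $d-1$ equals $p-1$, i.e.\ iff $d\equiv 0\pmod q$, which is excluded.

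Hence the $|\cF|\times|\cF|$ integer matrix $A=(P_F(v_{F'}))$ reduces modulo $p$ to the identity, giving $\operatorname{rank}_{\mathbb{F}_p}(A)=|\cF|$, and so $\operatorname{rank}_{\mathbb{Q}}(A)\ge|\cF|$ since any nonvanishing $|\cF|\times|\cF|$ minor mod $p$ is nonvanishing over $\mathbb{Q}$. I would then bound this rank above by the $\mathbb{Q}$-dimension of the span of the $P_F$ as functions on $\binom{[n]}{k}$. After multilinearization (using $x_i^j=x_i$ on $\{0,1\}$-inputs) each $P_F$ lies in $V_{\le q-1}:=\operatorname{span}_{\mathbb{Q}}\{\chi_S:|S|\le q-1\}$ with $\chi_S(x)=\prod_{i\in S}x_i$, and the identity $(k-|S|)\chi_S=\sum_{i\notin S}\chi_{S\cup\{i\}}$ valid on weight-$k$ vectors inductively reduces every $\chi_S$ with $|S|<q-1$ to a $\mathbb{Q}$-combination of $\{\chi_T:|T|=q-1\}$ (the denominators $k-|S|$ are nonzero integers because $|S|<q-1\le k-1$). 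Combined with the classical full $\mathbb{Q}$-rank $\binom{n}{q-1}$ of the inclusion matrix $W_{k,q-1}$ (valid for $q-1\le n-k$), this gives $\dim_{\mathbb{Q}}V_{\le q-1}=\binom{n}{q-1}$ and therefore $|\cF|\le\binom{n}{q-1}$.

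The main obstacle is engineering $P_F$ so that the diagonal is nonzero mod $p$ \emph{and} the off-diagonal vanishes mod $p$, for every residue $[k]_q$. The naive choice $\binom{\langle v_F,x\rangle}{q-1}$ produces diagonal entry $\binom{k}{q-1}$, which by Lucas vanishes mod $p$ unless $k\equiv-1\pmod q$; the shift by $-k+q-1$ side-steps this exactly by forcing $P_F(v_F)$ to be the constant $\binom{q-1}{q-1}=1$ regardless of $[k]_q$. Working over $\mathbb{Q}$ rather than $\mathbb{F}_p$ in the dimension step is likewise essential, because the reduction of $\chi_S$ to higher-level $\chi_T$ uses invertibility of $k-|S|$, which always holds in $\mathbb{Q}$ but can fail in $\mathbb{F}_p$ precisely when $k\equiv|S|\pmod p$.
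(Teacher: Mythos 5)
The paper does not prove Theorem~\ref{FWthm}; it only cites Frankl and Wilson~\cite{FW1981}, noting their proof proceeds via inclusion matrices, so there is no ``paper proof'' to match against. Your polynomial-method proof is correct and self-contained, and it handles the delicate points properly. In particular, the hybrid strategy is exactly what is needed: for $q=p^e$ with $e\ge 2$ the coefficients of $P_F(x)=\binom{\langle v_F,x\rangle-k+q-1}{q-1}$ are not $p$-integral (since $p\mid (q-1)!$), so one cannot reduce the \emph{polynomial} mod $p$ --- but the \emph{values} $P_F(v_{F'})$ are integers, and reducing those mod $p$ to get a diagonal matrix is fine, after which nonsingularity lifts to $\mathbb{Q}$. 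The Lucas computation (using that $q-1=p^e-1$ has all base-$p$ digits equal to $p-1$ in the low $e$ positions, so $\binom{d-1}{q-1}\not\equiv 0\pmod p$ iff $q\mid d$) is right, as is the observation that the shift by $-k+q-1$ normalizes the diagonal to $1$ for every residue of $k\bmod q$; and working over $\mathbb{Q}$ in the dimension step correctly sidesteps the non-invertibility of $k-|S|$ in $\mathbb{F}_p$. Two small cosmetic points: the denominator condition only requires $|S|<q-1\le k$ (i.e.\ $q\le k+1$), not $q-1\le k-1$ as written; and for the conclusion $|\cF|\le\binom{n}{q-1}$ you only need $\dim_{\mathbb{Q}}V_{\le q-1}\le\binom{n}{q-1}$, which already follows from having $\binom{n}{q-1}$ spanning vectors $\chi_T$ at level $q-1$ --- the appeal to the full rank of the inclusion matrix $W_{k,q-1}$ (and its hypothesis $q-1\le n-k$) is unnecessary and can safely be dropped. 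Ultimately both your argument and Frankl--Wilson's rest on the same underlying linear algebra on the slice $\binom{[n]}{k}$; what your write-up buys is a clean separation of the ``nonsingularity mod $p$'' step from the ``rank over $\mathbb{Q}$'' step and an explicit polynomial normalization instead of an inclusion-matrix bookkeeping.
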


Let $G_q(n, k)$ be the generalized Johnson graph corresponding to the $L$-system given in Theorem~\ref{FWthm}. Lubetzky and Stav~\cite{LubStav} and Haviv~\cite[Proposition 4.5]{Haviv} have noted that essentially the same proof as that of Frankl and Wilson using inclusion matrices gives the following stronger result. 

\begin{proposition}\label{lshprop}
For the generalized Johnson graph $G_q(n, k)$, if $q$ is a prime power and $q\le k+1$, 
\[c(G_q(n, k)) \le \text{minrank}_{\mathbb{F}_p}(G_q(n, k)) \le \binom{n}{q-1}.\]
\end{proposition}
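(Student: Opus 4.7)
The plan is to handle the two inequalities separately. The first inequality, $c(G_q(n,k))\le \mathrm{minrank}_{\mathbb{F}_p}(G_q(n,k))$, is immediate from Theorem~\ref{thm:haembd} applied to $G_q(n,k)$ over $\mathbb{F}_p$, where $p$ is the prime with $q=p^a$.

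For the second inequality, the strategy is to adapt the Frankl--Wilson inclusion-matrix argument so that it produces a low-rank matrix \emph{representing} the graph in the sense of Definition~\ref{defn:minrank}, rather than merely a linearly independent family indexed by an $L$-system. Concretely, I would fix a univariate polynomial $h(z)\in \mathbb{F}_p[z]$ of degree at most $q-1$ with $h(k)\not\equiv 0\pmod{p}$ and $h(m)\equiv 0\pmod{p}$ for every integer $m\in\{0,1,\ldots,k-1\}$ with $m\equiv k\pmod{q}$, and set $M_{F,F'}:=h(|F\cap F'|)$. By construction the diagonal is $h(k)\ne 0$, while every non-edge of $G_q(n,k)$ (that is, every pair $\{F,F'\}$ with $|F\cap F'|\equiv k\pmod q$ and $F\ne F'$, which forces $|F\cap F'|<k$) satisfies $M_{F,F'}=0$. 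So $M$ represents $G_q(n,k)$ over $\mathbb{F}_p$.

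The rank bound comes from viewing each row $M_{F,\cdot}$ as a polynomial of total degree at most $q-1$ in the characteristic vector $v_{F'}$ of the column. Expanding $h(z)=\sum_{j=0}^{q-1}c_j\binom{z}{j}$ in the binomial basis and using the inclusion-matrix identity $\binom{|F\cap F'|}{j}=(W_jW_j^{T})_{F,F'}$, where $W_j$ is the $\binom{[n]}{k}\times\binom{[n]}{j}$ set-containment matrix, one writes $M=\sum_{j=0}^{q-1}c_jW_jW_j^T$. Since the space of polynomials of degree at most $q-1$, restricted to the $k$-slice $\binom{[n]}{k}$, has dimension $\binom{n}{q-1}$ (this uses the hypothesis $q-1\le k$ and is the standard fact underlying the Delsarte/Johnson-scheme bounds), it follows that $\mathrm{rank}_{\mathbb{F}_p}(M)\le \binom{n}{q-1}$, which together with the Haemers bound completes the proof.

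The main obstacle is the existence of the polynomial $h$. Over $\mathbb{Q}$ one would need degree $\lfloor k/q\rfloor$ (the number of forbidden residues in $[0,k-1]$), which can vastly exceed $q-1$, so naive Lagrange interpolation fails. The prime-power hypothesis $q=p^a$ is what makes the argument work: via Lucas's theorem applied to a shifted binomial coefficient such as $\binom{z-k+q-1}{q-1}$ (or a suitable $\mathbb{F}_p$-linear combination of $\binom{z}{j}$ for $j\le q-1$), a single degree-$(q-1)$ polynomial over $\mathbb{F}_p$ can be arranged to vanish simultaneously on the entire arithmetic progression of bad residues, which is the algebraic core of the Frankl--Wilson modular intersection bound and the step whose verification I expect to be the most technical.
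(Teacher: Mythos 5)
Your two-step plan (Haemers bound plus a low-rank representing matrix via the Frankl--Wilson polynomial/inclusion-matrix method) is the same route the paper attributes to Lubetzky--Stav and Haviv, and the specific polynomial you name turns out to be the right one --- but your description of which pairs are non-edges, and hence of what $h$ must do, is reversed. By Definition~\ref{defn:johnsongraph}, $AB\in E(G(n,k,L))\iff |A\cap B|\notin L$; here $G_q(n,k)=G(n,k,L)$ with $L=\{m\in[0,k-1]:m\not\equiv k\pmod q\}$, so the \emph{edges} of $G_q(n,k)$ are the pairs with $|F\cap F'|\equiv k\pmod q$ and the \emph{non-edges} are the pairs with $|F\cap F'|\not\equiv k\pmod q$. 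A matrix representing $G_q(n,k)$ (Definition~\ref{defn:minrank}) must therefore have $M_{F,F'}=0$ whenever $|F\cap F'|\not\equiv k\pmod q$ and $F\ne F'$, which is the opposite of the condition you impose; with your stated vanishing set, $M$ would represent $G_q(n,k)^c$, and its restriction to an $L$-system would not be diagonal, so the rank bound would say nothing about $\alpha(G_q(n,k))$ or $c(G_q(n,k))$.

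The good news is that your candidate $h(z)=\binom{z-k+q-1}{q-1}$ does exactly the right thing, just not the thing you say: since $q-1=p^a-1$ has every base-$p$ digit equal to $p-1$, Lucas's theorem gives $\binom{N}{q-1}\not\equiv 0\pmod p$ iff $N\equiv -1\pmod q$, and handling negative tops via $\binom{-j}{q-1}=(-1)^{q-1}\binom{j+q-2}{q-1}$ shows that for every integer $0\le m\le k$ one has $h(m)\not\equiv 0\pmod p$ iff $m\equiv k\pmod q$. So $h(k)=1$ and $h$ vanishes mod $p$ exactly on the non-edge intersections; the set you must kill is $\{m\in[0,k-1]:m\not\equiv k\pmod q\}$, which has about $k-k/q$ elements, not the $\lfloor k/q\rfloor$ you describe, so the prime-power phenomenon is doing even more work than your ``main obstacle'' paragraph suggests. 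With the residues fixed, the rank step is fine and can be made airtight without worrying about the $\mathbb{F}_p$-dimension of the polynomial space on the $k$-slice: $M$ is an integer matrix, so $\mathrm{rank}_{\mathbb{F}_p}(M)\le\mathrm{rank}_{\mathbb{Q}}(M)$, and over $\mathbb{Q}$ the identity $\binom{k-j}{q-1-j}W_j=W_{q-1}Y_j$ (valid since $q-1\le k$, with $Y_j$ the $(q-1)$-by-$j$ inclusion matrix) places the column space of $M=\sum_{j\le q-1}c_jW_jW_j^T$ inside that of $W_{q-1}$, giving $\mathrm{rank}_{\mathbb{Q}}(M)\le\binom{n}{q-1}$.
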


It is this connection that we will use in giving an explicit construction of a graph with large gap between the Shannon capacity and the Lov\'asz number. Note that Theorem~\ref{thm:exactL=1} is already sufficient to obtain explicit generalized Johnson graphs with ratio $\vartheta/\alpha = \Omega(n^{\frac12 - \epsilon})$, as Frankl and F\"{u}redi~\cite{FF} proved that $\alpha(G(n, k, L')) = \Theta(n^{\max\{k-\ell-1, \ell\}})$, where, as in the statement of Theorem~\ref{thm:exactL=1}, $L' = [0, k-1]\setminus\{\ell\}$.  Furthermore, if $\ell+1$ is a prime power, then by setting $k=2\ell+1$, Proposition~\ref{lshprop} implies that the generalized Johnson graph $G_{q}(n, k)$ has ratio $\vartheta/c = \Omega(n^{\frac12 - \epsilon})$. We next show how to obtain constructions with larger gap.

\section{Explicit constructions of large gaps between the Shannon capacity and the Lov\'asz number}

We now give our explicit construction. 

\begin{proof}[Proof of Theorem~\ref{thm:lovaszthetaeps}] 
We use a slight variation of the Ramsey construction of Frankl and Wilson~\cite{FW1981}. Let $q = p^m$ be a prime power and define $k = q^2 - 1$. Then, consider the generalized Johnson graph $G_q(n, k)$ on $\binom{[n]}{k}$ vertices and with $A\sim B \iff |A\cap B|\equiv -1\pmod{q}$. Proposition~\ref{lshprop} implies
\[\text{minrank}_{\mathbb{F}_p}(G_q(n, k)) \le \binom{n}{q-1},\]
while Theorem~\ref{thm:genlovaszbd} implies 
\[\vartheta(G_q(n, k)) = \Theta(n^{q^2-q}).\]
Therefore, with $N = \binom{n}{q^2-1} = \Theta(n^{q^2-1})$, for $n \rightarrow \infty$
\[\vartheta(G_q(n, k))/\text{minrank}_{\mathbb{F}_p}(G_q(n, k)) = \Omega(N^{\frac{q^2-2q+1}{q^2 - 1}}) = \Omega(N^{1-\frac{2}{q+1}}).\]

Given $\epsilon > 0$, choosing $q$ sufficiently large implies the second part of Theorem~\ref{thm:lovaszthetaeps}. The first part of Theorem~\ref{thm:lovaszthetaeps} is a consequence of the second part of the theorem and the inequality $\alpha(G_q(n, k)) \le c(G_q(n, k))$. The third part follows from the inequality $\chi(G_q(n, k)) \ge \frac{N}{\alpha(G_q(n, k))}= \Omega(n^{q^2-q})$ and the fact that $\vartheta(G_q(n, k)^C) = \Theta(n^{q-1})$, which follows from Theorem~\ref{thm:schprodthm}.  
\end{proof}

Lubetzky and Stav~\cite{LubStav} provided a construction of a generalized Johnson graph $G$ with ratio $\vartheta(G)/\text{minrank}_{\mathbb{F}_p}(G) = \Omega(n^{\frac12 - o(1)})$ for any prime $p$. 
The construction used in Theorem~\ref{thm:lovaszthetaeps} gives an explicit construction which greatly improves on this result.

\begin{theorem}\label{thm:fixpcons}
For any $\epsilon > 0$ and any prime $p$, for infinitely many $n$ there is an explicit construction of a graph $G$ on $n$ vertices such that \[\vartheta(G)/\text{minrank}_{\mathbb{F}_p}(G) = \Omega(n^{1-\epsilon}).\]
\end{theorem}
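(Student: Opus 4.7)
The plan is to reuse, almost verbatim, the construction from the proof of Theorem~\ref{thm:lovaszthetaeps}, specializing the prime power $q$ to be a power of the given prime $p$. Given the prime $p$ and $\epsilon > 0$, I would choose an integer $m$ large enough that $q := p^m$ satisfies $\frac{2}{q+1} < \epsilon$, then set $k = q^2 - 1$ and take $G = G_q(n,k)$, letting $n \to \infty$ so that the asymptotic statement holds for infinitely many $n$. The number of vertices is $N = \binom{n}{k} = \Theta(n^{q^2-1})$.

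The point that distinguishes this from Theorem~\ref{thm:lovaszthetaeps} is that Proposition~\ref{lshprop} already gives the upper bound on $\text{minrank}$ over the prime field $\mathbb{F}_p$ when $q$ is a power of $p$. Hence for our choice of $q = p^m$,
\[
\text{minrank}_{\mathbb{F}_p}(G_q(n,k)) \le \binom{n}{q-1} = \Theta(n^{q-1}).
\]
For the lower bound on $\vartheta$, I would apply Theorem~\ref{thm:genlovaszbd} to the set $L = \{\ell \in [0,k-1] : \ell \not\equiv -1 \pmod{q}\}$. The excluded residues in $[0, q^2-2]$ are $q-1, 2q-1, \ldots, (q-1)q-1$, contributing $q-1$ values, so $|L| = (q^2-1) - (q-1) = q^2 - q$. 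Thus Theorem~\ref{thm:genlovaszbd} yields $\vartheta(G_q(n,k)) = \Theta(n^{q^2 - q})$.

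Taking the ratio and converting from $n$ to the vertex count $N$ gives
\[
\vartheta(G)/\text{minrank}_{\mathbb{F}_p}(G) = \Omega\!\left(n^{q^2 - 2q + 1}\right) = \Omega\!\left(N^{(q^2 - 2q + 1)/(q^2 - 1)}\right) = \Omega\!\left(N^{1 - 2/(q+1)}\right),
\]
which is $\Omega(N^{1-\epsilon})$ by the choice of $m$.

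There is no real obstacle here beyond bookkeeping: the whole argument of Theorem~\ref{thm:lovaszthetaeps} is preserved once we observe that we may take $q$ to be an arbitrarily large power of the fixed prime $p$ rather than an arbitrary prime power, and that the minrank bound in Proposition~\ref{lshprop} is valid over $\mathbb{F}_p$ precisely when $q$ is a power of $p$. The only verification worth checking carefully is the count $|L| = q^2 - q$, which pins down the exponent of $n$ in $\vartheta(G_q(n,k))$ via Theorem~\ref{thm:genlovaszbd}.
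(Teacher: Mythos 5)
Your proposal is correct and is exactly the argument the paper intends: the proof of Theorem~\ref{thm:lovaszthetaeps} already takes $q = p^m$ and uses Proposition~\ref{lshprop} over $\mathbb{F}_p$, so fixing $p$ and letting $m$ grow so that $2/(q+1) < \epsilon$ gives the result, and your count $|L| = q^2 - q$ pinning down $\vartheta(G_q(n,k)) = \Theta(n^{q^2-q})$ via Theorem~\ref{thm:genlovaszbd} is right.
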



Taking complements of the graph in Theorem~\ref{thm:fixpcons} yields an explicit construction of a graph $G$ with \[\text{minrank}_{\mathbb{F}_{p}}(G)/\vartheta(G) = \Omega(n^{1-\epsilon})\] for any $\epsilon > 0$ and prime $p$, using Theorem~\ref{thm:schprodthm} and the fact~\cite{Peeters} that for any field $\mathbb{F}$ and $n$-vertex graph $G$, $\text{minrank}_{\mathbb{F}}(G) \cdot \text{minrank}_{\mathbb{F}}(G^c)\ge n$.

We finally note that Theorem~\ref{thm:lovaszthetaeps} also holds if the Lov\'asz number is replaced by the Schrijver variant $\sigma$ (or the Delsarte linear programming bound). In particular, the graphs $G_q(n, k)$ again give an explicit construction of a graph $G$ on $N$ vertices with $\sigma(G)/c(G) = \Omega(N^{1-\epsilon})$.

\bibliographystyle{alpha}
\bibliography{lovasztheta}

\end{document}